\newcommand{\nc}{\newcommand}
\nc{\vg}{\mathfrak{v} } \nc{\wg}{\mathfrak{w} } \nc{\zg}{\mathfrak{z} }
\nc{\ngo}{\mathfrak{n} } \nc{\kg}{\mathfrak{k} } \nc{\mg}{\mathfrak{m} }
\nc{\bg}{\mathfrak{b} } \nc{\ggo}{\mathfrak{g} } \nc{\ggob}{\overline{\mathfrak{g}}
} \nc{\sog}{\mathfrak{so} } \nc{\sug}{\mathfrak{su} } \nc{\spg}{\mathfrak{sp} }
\nc{\slg}{\mathfrak{sl} } \nc{\glg}{\mathfrak{gl} } \nc{\cg}{\mathfrak{c} }
\nc{\rg}{\mathfrak{r} } \nc{\hg}{\mathfrak{h} } \nc{\tg}{\mathfrak{t} }
\nc{\ug}{\mathfrak{u} } \nc{\dg}{\mathfrak{d} } \nc{\ag}{\mathfrak{a} }
\nc{\pg}{\mathfrak{p} } \nc{\sg}{\mathfrak{s} } \nc{\pca}{\mathcal{P}}
\nc{\nca}{\mathcal{N}} \nc{\lca}{\mathcal{L}} \nc{\oca}{\mathcal{O}}
\nc{\mca}{\mathcal{M}} \nc{\tca}{\mathcal{T}} \nc{\aca}{\mathcal{A}}
\nc{\cca}{\mathcal{C}} \nc{\sca}{\mathcal{S}} \nc{\bca}{\mathcal{B}}
\nc{\vp}{\varphi} \nc{\ddt}{{\small \frac{{\rm d}}{{\rm d}t}}} \nc{\im}{\mathtt{i}}
\nc{\SO}{{\mathrm SO}} \nc{\Spe}{{\mathrm Sp}} \nc{\Sl}{{\mathrm SL}}
\nc{\SU}{{\mathrm SU}} \nc{\Or}{{\mathrm O}} \nc{\U}{{\mathrm U}} \nc{\Gl}{{\mathrm
GL}} \nc{\Se}{{\mathrm S}} \nc{\Cl}{{\mathrm Cl}} \nc{\Spein}{{\mathrm Spin}}
\nc{\Pin}{{\mathrm Pin}}
\nc{\RR}{{\Bbb R}} \nc{\HH}{{\Bbb H}} \nc{\CC}{{\Bbb C}} \nc{\ZZ}{{\Bbb Z}}
\nc{\FF}{{\Bbb F}} \nc{\NN}{{\Bbb N}} \nc{\QQ}{{\Bbb Q}} \nc{\PP}{{\Bbb P}}
\nc{\G}{\mathrm{GL}_n(\RR)}
\nc{\preq}{\simeq_\RR}
\nc{\prek}{\simeq_K}
\nc{\vs}{\vspace{.2cm}} \nc{\vsp}{\vspace{1cm}} \nc{\ip}{\langle\cdot,\cdot\rangle}
\nc{\la}{\langle} \nc{\ra}{\rangle} \nc{\unm}{\frac{1}{2}} \nc{\unc}{\frac{1}{4}}
\nc{\und}{\frac{1}{16}} \nc{\no}{\vs\noindent} \nc{\lam}{\Lambda^2\ngo^*\otimes\ngo}
\nc{\tangz}{{\rm T}^{\rm Zar}} \nc{\nor}{{\sf n}}
\nc{\eigen}{(k_1<...<k_r;d_1,...,d_r)} \nc{\eigencero}{(0<k_2<...<k_r;d_1,...,d_r)}
\nc{\mum}{/\!\!/} \nc{\kir}{/\!\!/\!\!/}
\nc{\He}{\operatorname{Hess}} \nc{\ad}{\operatorname{ad}}
\nc{\Ad}{\operatorname{Ad}} \nc{\rank}{\operatorname{rank}}
\nc{\Irr}{\operatorname{Irr}} \nc{\End}{\operatorname{End}}
\nc{\Aut}{\operatorname{Aut}} \nc{\Inn}{\operatorname{Inn}}
\nc{\Der}{\operatorname{Der}} \nc{\Ker}{\operatorname{Ker}}
\nc{\Iso}{\operatorname{I}} \nc{\Diff}{\operatorname{Diff}}
\nc{\Lie}{\operatorname{L}} \nc{\tr}{\operatorname{tr}} \nc{\dif}{\operatorname{d}}
\nc{\sen}{\operatorname{sen}} \nc{\modu}{\operatorname{mod}}
\nc{\Ric}{\operatorname{Ric}} \nc{\Ricac}{\operatorname{Ric^{ac}}}
\nc{\Ricg}{\operatorname{Ric^{\gamma}}} \nc{\Ricc}{\operatorname{Ric^{c}}}
\nc{\Ricj}{\operatorname{Ric^{J}}}
\nc{\sym}{\operatorname{sym}} \nc{\symac}{\operatorname{sym^{ac}}}
\nc{\symc}{\operatorname{sym^{c}}} \nc{\scalar}{\operatorname{sc}}
\nc{\grad}{\operatorname{grad}} \nc{\ricci}{\operatorname{Rc}}
\nc{\ricciac}{\operatorname{ric^{ac}}} \nc{\riccic}{\operatorname{ric^{c}}}
\nc{\riccig}{\operatorname{ric^{\gamma}}} \nc{\Rin}{\operatorname{M}}
\nc{\Le}{\operatorname{L}} \nc{\tang}{\operatorname{T}}
\nc{\level}{\operatorname{level}} \nc{\rad}{\operatorname{r}}
\nc{\abel}{\operatorname{ab}}
\nc{\Pf}{\operatorname{Pf}}
\theoremstyle{plain}
\newtheorem{theorem}{Theorem}[section]
\newtheorem{proposition}[theorem]{Proposition}
\newtheorem{corollary}[theorem]{Corollary}
\theoremstyle{definition}
\newtheorem{definition}[theorem]{Definition}
\theoremstyle{remark}
\newtheorem{remark}[theorem]{Remark}
\newtheorem{example}[theorem]{Example}
\title[Invariants of complex structures]{Invariants of complex structures on nilmanifolds}
\author{Edwin Alejandro Rodr\'{\i}guez Valencia}
\address{FaMAF and CIEM, Universidad Nacional de C\'ordoba, Medina Allende s/n, 5000 C\'ordoba, Argentina}
\email{earodriguez@famaf.unc.edu.ar}
\thanks{2010 {\it Mathematics Subject Classification.} Primary: 53C15, 32Q60;
Secondary: 53C30, 22E25, 37J15. \\
{\it Key words and phrases.}  complex, nilmanifolds, nilpotent Lie groups,
minimal metrics, Pfaffian forms. \\
Fully supported by a CONICET fellowship (Argentina).}
\begin{document}
\renewcommand{\tablename}{Table}
\maketitle

\begin{abstract}
Let $(N, J)$ be a simply connected $2n$-dimensional nilpotent Lie group \linebreak endowed
with an invariant complex structure. We define a left invariant Riemannian metric
on $N$ compatible with $J$ to be {\it minimal}, if it minimizes the norm of the
invariant part of the Ricci tensor among all compatible metrics with the same
scalar curvature.  In \cite{canonical}, J. Lauret proved that minimal
metrics (if any) are unique up to isometry and sca\-ling.  This uniqueness allows
us to distinguish two complex structures with Riemannian data, giving rise to a
great deal of invariants.

We show how to use a Riemannian invariant: the eigenvalues of the
Ricci operator, polynomial invariants and discrete invariants to give an alternative
proof of the pairwise non-isomorphism between the structures which have appeared in the
classification of abelian complex structures on $6$-dimensional nilpotent Lie algebras
given in \cite{AndBrbDtt}.  We also present some continuous families in dimension $8$.
\end{abstract}

%=================================================================================

\section{Introduction}\label{intro}

Let $N$ be a real $2n$-dimensional nilpotent Lie group with Lie algebra $\ngo$,
whose Lie bracket will be denoted by $\mu :\ngo\times\ngo\longrightarrow\ngo$.  An {\it invariant complex structure} on $N$ is defined by a map $J:\ngo\longrightarrow\ngo$ satisfying $J^2=-I$ and the integrability condition
\begin{equation}\label{integral}
\mu(JX,JY)=\mu(X,Y)+J\mu(JX,Y)+J\mu(X,JY), \qquad \forall X,Y\in\ngo.
\end{equation}
By left translating $J$, one obtains a complex manifold $(N,J)$, as well as compact complex manifolds $(N/\Gamma,J)$ if $N$ admits cocompact discrete subgroups $\Gamma$, which are usually called \textit{nilmanifolds} and play an important role in complex geometry.

The automorphism group $\Aut(\ngo)$ acts by conjugation on the set of all invariant complex structures on $\ngo$, and hence two such structures are considered to be {\it equivalent} if they belong to the same conjugation class.  The lack of invariants makes the classification of invariant complex structures a difficult task.  This has only been achieved in dimension $\leq 6$ in the nilpotent case in \cite{nilstruc}, and for any $6$-dimensional Lie algebra in the abelian case in \cite{AndBrbDtt}.

Our aim in this paper is to use two different invariants (namely, minimal metrics and Pfaffian forms, see below), to give an alternative proof of the non-equivalence between any two abelian complex structures on nilpotent Lie algebras of dimension $6$ obtained in the classification list given in \cite[Theorem 3.5.]{AndBrbDtt}.  Along the way, we prove that any such structure, excepting only one, does admit a minimal metric.  As another application of the invariants, we give in Section \ref{result8} many families depending on one, two and three parameters of abelian complex structures on $8$-dimensional $2$-step nilpotent Lie algebras, showing that a full classification could be really difficult in dimension $8$.

%---------------------------------------------------------------------------------

\subsection{Minimal metrics} A left invariant metric which is
{\it compatible} with $(N,J)$, also called a {\it hermitian metric}, is determined by an inner product $\ip$ on $\ngo$ such that
$$
\la JX,JY\ra=\la X,Y\ra, \quad \forall X,Y\in\ngo.
$$
We consider
$$
\Ricc_{\ip}:=\unm\left(\Ric_{\ip}-J\Ric_{\ip}J\right),
$$
the complexified part of the Ricci operator $\Ric_{\ip}$ of the Hermitian manifold $(N,J,\ip)$, and the corresponding
$(1,1)$-component of the Ricci tensor $\riccic_{\ip}:=\la\Ricc_{\ip}\cdot,\cdot\ra$.

A compatible metric $\ip$ on $(N,J)$ is called {\it minimal} if
$$
\tr{(\Ricc_{\ip})^2}=\min \left\{ \tr{(\Ricc_{\ip'})^2} :
\scalar(\ip')=\scalar(\ip)\right\},
$$
where $\ip'$ runs over all compatible metrics on $(N,J)$ and $\scalar(\ip)=\tr\Ric_{\ip}=\tr\Ricc_{\ip}$ is the scalar curvature.  In \cite{canonical}, the following conditions on $\ip$ are proved to be equivalent to minimality, showing that such
metrics are special from many other points of view:
\begin{itemize}
\item[(i)] The solution $\ip_t$ with initial value $\ip_0=\ip$ to the evolution equation
$$
\ddt \ip_t=-2\riccic_{\ip_t},
$$
is self-similar, in the sense that $\ip_t=c_t\vp_t^*\ip$ for some $c_t>0$ and one-parameter group
of automorphisms $\vp_t$ of $N$.

\item[(ii)] There exist a vector field $X$ on $N$ and $c\in\RR$ such that
$$
\riccic_{\ip}=c\ip+L_X\ip,
$$
where $L_X\ip$ denotes the usual Lie derivative.  In analogy with the well-known concept in Ricci flow theory,
one may call $\ip$ a $(1,1)$-Ricci soliton.

\item[(iii)] $\Ricc_{\ip}=cI+D$ for some $c\in\RR$ and
$D\in\Der(\ngo)$.
\end{itemize}

The uniqueness up to isometric isomorphism and scaling of a minimal metric on a
given $(N,J)$ was also proved in \cite{canonical}, and can be used to obtain invariants in the following way.  If $(N,J_1,\ip_1)$ and $(N,J_2,\ip_2)$ are minimal and $J_1$ is equivalent to $J_2$, then they must be conjugate via an automorphism which is an isometry between $\ip_1$ and $\ip_2$.  This provides us with a lot of invariants, namely the Riemannian geometry invariants including all different kind of curvatures.

%---------------------------------------------------------------------------------

\subsection{Pfaffian forms}
Consider a real vector space $\ngo$ and fix a direct sum decomposition
$$
\ngo=\ngo_1\oplus\ngo_2, \qquad \dim{\ngo_1}=m, \qquad\dim{\ngo_2}=n.
$$
Every
$2$-step nilpotent Lie algebra of dimension $m+n$ with derived algebra of dimension $\leq n$ can be represented by a bilinear skew-symmetric map
$$
\mu:\ngo_1\times\ngo_1\longrightarrow\ngo_2.
$$

For a given inner product $\ip$ on $\ngo=\ngo_1\oplus\ngo_2$ (with $\ngo_1\perp\ngo_2$), one can encode the structural constants of $\mu$ in a map $J_\mu:\ngo_2\longrightarrow\sog(\ngo_1)$ defined by
$$
\la J_\mu(Z)X,Y\ra = \la\mu(X,Y),Z\ra, \qquad \forall X,Y\in\ngo_1, \; Z\in\ngo_2.
$$
There is a nice and useful isomorphism invariant for $2$-step algebras (with $m$ even) called the {\it Pfaffian form}, which is the projective equivalence class of the homogeneous polynomial $f_\mu$ of degree $m/2$ in $n$ variables defined by
$$
f_\mu(Z)^2=\det{J_\mu(Z)}, \qquad \forall Z\in\ngo_2,
$$
for each $\mu$ of type $(n,m)$ (see Section \ref{Pfaffian}).

For each $\mu\in V_{n,m}:=\Lambda^2\ngo_{1}^*\otimes\ngo_{2}$, let $N_\mu$ denote the simply connected nilpotent Lie group with Lie algebra $(\ngo, \mu)$. We prove that if two complex nilmanifolds $(N_{\mu}, J)$ and $(N_{\lambda}, J)$ are \textit{holomorphically isomorphic}, then
$f_\lambda\in \RR_{>0}\Gl_{q}(\CC)\cdot f_\mu$, with $n=2q$ (see Proposition \ref{PfafCC}).  This will allow us to use the existence of minimal metrics to distinguish complex nilmanifolds by means of invariants of forms.

\vs \noindent {\it Acknowledgements.}  This research is part of the Ph.D. thesis (Universidad Nacional de
C\'ordoba) by the author. I am grateful to my advisor Jorge Lauret for his invaluable help
during the preparation of the paper.

%=================================================================================

\section{Preliminaries}\label{basic}

In this section, we recall basic notions on complex structures on nilmanifolds
and their \linebreak hermitian metrics.

Let $N$ be a simply connected $2n$-dimensional nilpotent Lie group with Lie algebra $\ngo$,
whose Lie bracket will be denoted by $\mu :\ngo\times\ngo \to \ngo$.
\ An invariant {\it complex structure} on $N$ is defined by a map
$J:\ngo\to\ngo$ satisfying $J^2=-I$ and such that
\begin{equation*}%\label{integral}
\mu(JX,JY)=\mu(X,Y)+J\mu(JX,Y)+J\mu(X,JY), \qquad \forall X,Y\in\ngo.
\end{equation*}
We say that $J$ is \textit{abelian} if the following condition holds:
$$\mu(JX,JY)=\mu(X,Y), \qquad \forall X,Y\in\ngo.$$

\begin{definition}\label{holomiso}
Two complex structures $J_1$ and $J_2$ on $N$ are said to be \textit{equivalent} if there
exists an automorphism $\alpha$ of $\ngo$ satisfying $J_2 = \alpha J_1 \alpha^{-1}$. \
Two pairs $(N_1, J_1)$ and $(N_2, J_2)$ are \textit{holomorphically isomorphic} if
there exists a Lie algebra isomorphism $\alpha: \ngo_1 \to \ngo_2$ such that
$J_2 = \alpha J_1 \alpha^{-1}$.
\end{definition}

We fix a $2n$-dimensional real vector space $\ngo$, and consider as a parameter space for
the set of all real nilpotent Lie algebras of a given dimension $2n$, the algebraic subset
$$
\nca:=\{\mu\in V:\mu\;\mbox{satisfies Jacobi and is nilpotent}\},
$$
where $V:=\lam$ is the vector space of all skew-symmetric bilinear maps from $\ngo\times\ngo$
to $\ngo$. Recall that any inner product $\ip$ on $\ngo$ determines an inner product
on $V$, also denoted by $\ip$, as follows: if $\{e_i\}$ is a orthonormal basis
of $\ngo$,
\begin{align}\label{prodtint}
\langle \mu, \lambda \rangle& := \sum_{i,j} \langle \mu(e_i,e_j), \lambda(e_i,e_j) \rangle\\
&=\sum_{i,j,k} \langle \mu(e_i,e_j), e_k \rangle \langle \lambda(e_i,e_j), e_k \rangle\notag.
\end{align}

For each $\mu\in\nca$, let $N_\mu$ denote the simply connected nilpotent Lie group
with Lie algebra $(\ngo, \mu)$. We now fix a map $J:\ngo\to\ngo$ such that $J^2=-I$. The
corresponding Lie group $$\Gl_n(\CC)=\{g\in\Gl_{2n}(\RR) : gJ = Jg\}$$
acts naturally on $V$ by $g\cdot\mu(\cdot,\cdot)=g\mu(g^{-1}\cdot,g^{-1}\cdot)$,
leaving $\nca$ invariant, as well as the algebraic subset $\nca_{J}\subset \nca$ given by
$$\nca_{J}:=\{\mu\in\nca: \mu\;\mbox{satisfies (\ref{integral})}\}.$$

We can identify each $\mu\in\nca_{J}$ with a \textit{complex nilmanifold} as follows:
\begin{align}\label{metrJ}
\mu\leftrightarrow (N_{\mu}, J).
\end{align}

\begin{proposition}
Two complex nilmanifolds $\mu$ and $\lambda$ are holomorphically isomorphic if and only if
$\lambda\in \Gl_{n}(\CC)\cdot \mu$.
\end{proposition}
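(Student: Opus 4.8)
The plan is simply to unwind Definition \ref{holomiso} in the present setting, where a single map $J$ with $J^2=-I$ has been fixed once and for all and the complex nilmanifolds $\mu,\lambda\in\nca_J$ are identified with $(N_\mu,J)$ and $(N_\lambda,J)$ via \eqref{metrJ}. First I would observe that a holomorphic isomorphism between $(N_\mu,J)$ and $(N_\lambda,J)$ is, by definition, a Lie algebra isomorphism $\alpha\colon(\ngo,\mu)\to(\ngo,\lambda)$ satisfying $J=\alpha J\alpha^{-1}$; the latter is equivalent to $\alpha J=J\alpha$, which is precisely the condition $\alpha\in\Gl_n(\CC)=\{g\in\Gl_{2n}(\RR):gJ=Jg\}$.

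Next I would translate the condition that $\alpha$ be a Lie algebra isomorphism, namely $\alpha\,\mu(X,Y)=\lambda(\alpha X,\alpha Y)$ for all $X,Y\in\ngo$, into an orbit statement. Replacing $X$ by $\alpha^{-1}X$ and $Y$ by $\alpha^{-1}Y$ gives $\lambda(X,Y)=\alpha\,\mu(\alpha^{-1}X,\alpha^{-1}Y)=(\alpha\cdot\mu)(X,Y)$ for all $X,Y$, i.e. $\lambda=\alpha\cdot\mu$. Thus $(N_\mu,J)$ and $(N_\lambda,J)$ are holomorphically isomorphic exactly when there is some $\alpha\in\Gl_n(\CC)$ with $\lambda=\alpha\cdot\mu$, that is, $\lambda\in\Gl_n(\CC)\cdot\mu$. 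Conversely, if $\lambda=g\cdot\mu$ with $g\in\Gl_n(\CC)$, then $g$ is by the same computation a Lie algebra isomorphism $(\ngo,\mu)\to(\ngo,\lambda)$ commuting with $J$, hence a holomorphic isomorphism; note that $\lambda\in\nca_J$ is automatic since $\Gl_n(\CC)$ leaves $\nca_J$ invariant.

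The only point requiring a word of justification is the passage between Lie algebra isomorphisms and genuine isomorphisms of the complex manifolds: since $N_\mu$ and $N_\lambda$ are simply connected, $\alpha$ integrates to a Lie group isomorphism $\widetilde\alpha\colon N_\mu\to N_\lambda$ with $d\widetilde\alpha|_e=\alpha$, and because the left-invariant complex structures are obtained by left-translating $J$ and $\alpha J=J\alpha$, the map $\widetilde\alpha$ (and its inverse) is holomorphic. I expect no real obstacle here; the statement is essentially a bookkeeping identity relating the algebraic $\Gl_n(\CC)$-action on $V=\lam$ to holomorphic isomorphism, and the mild care needed is only in checking that the ``equivalence of $J$ with itself'' condition is exactly commutation with $J$ and that simple connectedness lets one pass from the Lie algebra to the Lie group level.
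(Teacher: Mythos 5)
Your proposal is correct and follows essentially the same route as the paper's own proof: both unwind Definition \ref{holomiso} to see that a holomorphic isomorphism is a Lie algebra isomorphism commuting with $J$, hence an element of $\Gl_n(\CC)$ carrying $\mu$ to $\lambda$ under the standard action. The extra remarks you include (integrating to the group level via simple connectedness, invariance of $\nca_J$) are correct but are left implicit in the paper.
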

\begin{proof}
If we suppose that $(N_{\mu}, J)$ and $(N_{\lambda}, J)$ are holomorphically isomorphic,
then there exists a Lie algebra isomorphism $g^{-1}: (\ngo, \lambda)\mapsto (\ngo, \mu)$
such that $J=g J g^{-1}$.  Hence, $\lambda = g\cdot\mu$ and $g\in \Gl_{n}(\CC)$
(taking their matrix representation).
\end{proof}

A left invariant Riemannian metric on $N$ is said to be {\it compatible} with
a complex structure $J$ on $N$ if it is defined by an inner product $\ip$ on $\ngo$ such that
$$
\la JX,JY\ra=\la X,Y\ra, \qquad \forall X,Y\in\ngo,
$$
that is, $J$ is orthogonal with respect to $\ip$. We denote by $\cca=\cca(N,J)$ the set
of all left invariant metrics on $N$ compatible with $J$.

\begin{definition}
Two triples $(N_1, J_1, \ip)$ and $(N_2, J_2, \ip^\prime)$, with $\ip\in\cca(N_1,J_1)$ and
$\ip^\prime\in\cca(N_2,J_2)$, are said to be \textit{isometric isomorphic} if there exists a Lie algebra
\linebreak isomorphism $\varphi: \ngo_1 \to \ngo_2$ such that
$J_2 = \varphi J_1 \varphi^{-1}$ and $\ip^\prime = \langle \varphi^{-1}\cdot, \varphi^{-1}\cdot \rangle$.
\end{definition}

We now identify each $\mu\in\nca_{J}$ with a \textit{Hermitian nilmanifold} in the following way:
\begin{align}\label{metrcorch}
\mu\leftrightarrow (N_{\mu}, J, \ip),
\end{align}
where $\ip$ is a fixed inner product on $\ngo$ compatible with $J$. Therefore, each
$\mu\in\nca_{J}$ can be viewed in this way as a \textit{Hermitian metric} compatible with
$(N_{\mu}, J)$, and two metrics $\mu$, $\lambda$ are compatible with the same complex structure
if and only if they live in the same $\Gl_{n}(\CC)$-orbit. Indeed, each $g\in \Gl_{n}(\CC)$
determines a Riemannian isometry preserving the complex structure
\begin{align}\label{isom}
(N_{g\cdot\mu}, J, \ip)\to (N_{\mu}, J, \la g\cdot, g\cdot\ra)
\end{align}
by exponentiating the Lie algebra isomorphism $g^{-1}: (\ngo, g\cdot\mu)\mapsto (\ngo, \mu)$.
We then have the identification $\Gl_{n}(\CC)\cdot\mu = \cca(N_{\mu}, J)$, for any $\mu\in\nca_{J}$.

%========================================================================================================

\section{Invariants}\label{Inv}

We now discuss the problem of distinguishing two complex nilmanifolds up to holomorphic
isomorphism, by considering different types of invariants.

\subsection{Minimal metrics}\label{minimal}
In \cite{canonical}, J. Lauret showed how to use the complexified part of the Ricci operator of
a nilpotent Lie group given, to determinate the existence of compatible \textit{minimal}
metrics with an invariant geometric structure on the Lie group. Furthermore, he proved
that these metrics (if any) are unique up to isometry and scaling. This property allows us
to distinguish two geometric structure with invariants coming from Riemannian geometric.
In this section, we will be apply these results to the complex case and use the identifications
(\ref{metrJ}) and (\ref{metrcorch}) to rewrite them in terms of data arising from the Lie
algebra; this will be the basis of our method: fix a complex structure and move the
bracket. This method is explained in a more detailed way in Section \ref{result6} in the
$6$-dimensional case.

The following theorem was obtained by using strong results from geometric invariant theory,
mainly related to the moment map of a real representation of a real reductive Lie group.

\begin{theorem}\cite{canonical}\label{RS1}
Let $F: \nca_{J} \to \RR$ be defined by $F(\mu):= \tr(\Ricc_{\mu})^2/\|\mu\|^4$, where
$\Ricc_{\mu}$ is the orthogonal projection of the Ricci operator $\Ric_{\mu}$ of the
Riemannian manifold $(N_\mu,\ip)$ onto the space of symmetric maps of $(\ngo,\ip)$
which commute with $J$. Then for $\mu\in\nca_{J}$, the following conditions are equivalent:
\begin{itemize}
\item[(i)] $\mu$ is a critical point of $F$.

\item[(ii)] $F|_{\Gl_{n}(\CC)\cdot\mu}$ attains its minimum value at $\mu$.

\item[(iii)] $\Ricc_{\mu}=cI+D$ for some $c\in \RR$, $D\in\Der(\ngo)$.
\end{itemize}
Moreover, all the other critical points of $F$ in the orbit $\Gl_{n}(\CC)\cdot\mu$
lie in $\RR^{*}\U(n)\cdot\mu$.
\end{theorem}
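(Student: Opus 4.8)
The plan is to read the statement as an instance of the theory of the norm-squared of the moment map for a representation of a real reductive Lie group, as hinted just before the statement. Let $G:=\Gl_n(\CC)$ act linearly on $V=\lam$ by $g\cdot\mu=g\mu(g^{-1}\cdot,g^{-1}\cdot)$ and let $\theta:\glg_n(\CC)\to\End(V)$ denote the derived representation, so $\theta(E)\mu=E\circ\mu-\mu(E\cdot,\cdot)-\mu(\cdot,E\cdot)$; one checks at once that $\theta(E)\mu=0$ exactly when $E\in\Der(\ngo,\mu)$, and that $\theta(I)\mu=-\mu$. With respect to the fixed inner product, $\U(n)$ is a maximal compact subgroup of $G$ and the Cartan decomposition reads $\glg_n(\CC)=\ug(n)\oplus\pg$, where $\pg$ is the space of symmetric endomorphisms of $(\ngo,\ip)$ commuting with $J$ — precisely the subspace onto which $\Ric_\mu$ is orthogonally projected to form $\Ricc_\mu$. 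I would then set up the moment map $m:V\setminus\{0\}\to\pg$, $\la m(\mu),E\ra=\|\mu\|^{-2}\la\theta(E)\mu,\mu\ra$ for $E\in\pg$, and record the two facts that carry the proof. First, the well-known identification of the Ricci operator of a nilmanifold with a moment-map value — concretely, $\la\Ric_\mu,E\ra$ is a fixed nonzero constant times $\la\theta(E)\mu,\mu\ra$ for symmetric $E$ — gives, after projecting onto $\pg$, that $\Ricc_\mu=c\,\|\mu\|^2\,m(\mu)$ for a universal constant $c$, so that $F=c^2\|m\|^2$. Second, the standard computation of the gradient of $\|m\|^2$ shows that $\grad F_\mu$ is a scalar multiple of $\theta(m(\mu))\mu-\|m(\mu)\|^2\mu$; hence it lies in $T_\mu(G\cdot\mu)=\theta(\glg_n(\CC))\mu$ (note $\mu=-\theta(I)\mu$ is itself in this space), and $\grad F_\mu=0$ if and only if $\theta(m(\mu))\mu\in\RR\mu$.

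From these two facts, the equivalence (i)$\Leftrightarrow$(iii) and the reduction of (i) to a statement about the single orbit $G\cdot\mu$ are formal. Since $\nca_J$ is $G$-invariant, $T_\mu(G\cdot\mu)\subseteq T_\mu\nca_J$, and a vector that is simultaneously tangent and orthogonal to the orbit vanishes; therefore $\mu$ is a critical point of $F$ on $\nca_J$ $\Leftrightarrow$ $\grad F_\mu=0$ $\Leftrightarrow$ $\mu$ is a critical point of $F|_{G\cdot\mu}$ — this is exactly the feature of the situation that legitimizes the ``fix $J$, move the bracket'' method. On the other hand, $\theta(m(\mu))\mu=\lambda\mu$ for some $\lambda\in\RR$ rewrites, using $\theta(I)\mu=-\mu$, as $\theta(m(\mu)+\lambda I)\mu=0$, i.e.\ $m(\mu)+\lambda I\in\Der(\ngo)$; and since $\Ricc_\mu=c\,\|\mu\|^2 m(\mu)$ with $\Der(\ngo)$ a linear subspace, this is equivalent to $\Ricc_\mu=c'I+D$ for some $c'\in\RR$ and $D\in\Der(\ngo)$. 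Thus (i)$\Leftrightarrow$(iii).

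It remains to upgrade ``$\mu$ is a critical point of $F|_{G\cdot\mu}$'' to ``$F|_{G\cdot\mu}$ attains its minimum at $\mu$'', together with the final uniqueness clause, and this is where the real weight of the argument lies. The required input is the real-reductive form of the Kempf--Ness/Ness/Kirwan principle: every critical point of the norm-squared of the moment map, restricted to a single $G$-orbit, is a global minimum on that orbit, and any two critical points in one orbit lie in a common orbit of $\U(n)$ up to scaling. The argument for this runs through the convexity of $t\mapsto\log\|\exp(tE)\cdot\mu\|^2$ for $E\in\pg$: this convexity forces $E\mapsto F(\exp(E)\cdot\mu)$ to behave along the flats of the symmetric space $G/\U(n)$ like a convex function whose only critical points are minima, and a standard analysis of its behaviour in the directions tangent to the stabilizer of $\mu$ pins the minimum set inside the orbit down to a single $\U(n)$-orbit, enlarged to an $\RR^{*}\U(n)$-orbit because the central direction $\RR I\subseteq\pg$ contributes the constant $\la m(\mu),I\ra=-1$ and hence only rescales $\mu$, leaving $F$ unchanged. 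In the real setting one may invoke the treatments of this circle of ideas due to Richardson--Slodowy or Heinzner and coauthors, or carry out the symmetric-space estimate directly; this is the geometric-invariant-theory input referred to before the statement, and it is the main obstacle. Finally, unwinding the identification $G\cdot\mu=\cca(N_\mu,J)$ from Section~\ref{basic} turns condition (ii) into the assertion that $\ip$ is a minimal metric on $(N_\mu,J)$ in the sense of the introduction, closing the loop.
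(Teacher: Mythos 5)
Your reconstruction is correct and follows exactly the route the paper has in mind: Theorem \ref{RS1} is imported from \cite{canonical} without proof, with only the remark that it rests on the moment map of a real representation of a real reductive group, and your identification of $\Ricc_\mu$ with the $\pg$-component of the moment map, the gradient computation giving (i)$\Leftrightarrow$(iii), and the appeal to the real Kempf--Ness/Richardson--Slodowy theory for (ii) and the $\RR^{*}\U(n)$-uniqueness clause is precisely the argument of that source. Since the paper offers no proof of its own to compare against, there is nothing further to reconcile; your sketch is a faithful account of the cited argument, with the genuinely hard step (minimality of critical points of $\|m\|^2$ along an orbit and uniqueness up to the maximal compact) correctly isolated and attributed rather than reproved.
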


A complex nilmanifold $\mu$ is said to be \textit{minimal} if it satisfies any of the conditions
in the previous theorem.

\begin{corollary}\label{CRS1}
Two minimal complex nilmanifolds $\mu$ and $\lambda$ are
isomorphic if and only if \linebreak $\lambda\in\RR^{*}\U(n)\cdot \mu$.
\end{corollary}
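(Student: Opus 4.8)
The plan is to deduce Corollary \ref{CRS1} directly from Theorem \ref{RS1} together with the identification $\Gl_n(\CC)\cdot\mu=\cca(N_\mu,J)$ established in Section \ref{basic}. The key observation is that holomorphic isomorphism of $\mu$ and $\lambda$ (as complex nilmanifolds) means, by the Proposition in Section \ref{basic}, precisely that $\lambda\in\Gl_n(\CC)\cdot\mu$; so the whole statement takes place inside a single $\Gl_n(\CC)$-orbit, where Theorem \ref{RS1}(ii) and the final sentence of that theorem apply.

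First I would prove the ``only if'' direction. Suppose $\mu$ and $\lambda$ are both minimal and isomorphic as complex nilmanifolds. By the Proposition of Section \ref{basic}, $\lambda=g\cdot\mu$ for some $g\in\Gl_n(\CC)$, so $\lambda$ lies in the orbit $\Gl_n(\CC)\cdot\mu$. Since $\mu$ is minimal, it is a critical point of $F$ (condition (i)), and then by (ii) the restriction $F|_{\Gl_n(\CC)\cdot\mu}$ attains its minimum at $\mu$; the same argument applied to $\lambda$ (using that $\Gl_n(\CC)\cdot\lambda=\Gl_n(\CC)\cdot\mu$) shows $\lambda$ is also a critical point of $F$ lying in this same orbit. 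By the ``Moreover'' clause of Theorem \ref{RS1}, every critical point of $F$ in $\Gl_n(\CC)\cdot\mu$ lies in $\RR^{*}\U(n)\cdot\mu$; hence $\lambda\in\RR^{*}\U(n)\cdot\mu$, as desired.

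For the ``if'' direction, assume $\lambda\in\RR^{*}\U(n)\cdot\mu$, say $\lambda=c\,k\cdot\mu$ with $c\in\RR^{*}$ and $k\in\U(n)$. Since $\RR^{*}\U(n)\subset\Gl_n(\CC)$, we have $\lambda\in\Gl_n(\CC)\cdot\mu$, so by the Proposition of Section \ref{basic} the complex nilmanifolds $\mu$ and $\lambda$ are holomorphically isomorphic; what remains is to check that $\lambda$ is again minimal, which is immediate because $F$ is constant on $\RR^{*}\U(n)$-orbits (scaling by $c$ leaves the scale-invariant quotient $\tr(\Ricc)^2/\|\cdot\|^4$ unchanged, and the $\U(n)$-action is by isometries, conjugating $\Ric_\mu$ orthogonally and commuting with $J$, hence preserving both $\Ricc_\mu$ and the norm); therefore if $F$ attains its orbit-minimum at $\mu$ it also attains it at $\lambda$, so $\lambda$ satisfies condition (ii) and is minimal.

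The one point requiring a little care — and the only real content beyond quoting Theorem \ref{RS1} — is the verification in the ``if'' direction that $F$ is genuinely invariant under $\RR^{*}\U(n)$, i.e. that the orthogonal projection $\Ricc$ onto $J$-commuting symmetric maps behaves equivariantly under $k\in\U(n)=\Gl_n(\CC)\cap\Or(2n)$; this is where one uses that $k$ is simultaneously an isometry of $\ip$ and commutes with $J$, so that $\Ric_{k\cdot\mu}=k\Ric_\mu k^{-1}$ and conjugation by $k$ preserves the splitting into the $J$-commuting part and its complement. I do not expect any serious obstacle here, since it is the same computation that underlies the isometry \eqref{isom}.
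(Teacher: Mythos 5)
Your proposal is correct and follows essentially the route the paper intends: the paper states Corollary \ref{CRS1} as an immediate consequence of Theorem \ref{RS1} (its ``Moreover'' clause) together with the identification of holomorphic isomorphism with membership in the same $\Gl_n(\CC)$-orbit, which is exactly your argument. The only difference is that your verification in the ``if'' direction that $\lambda$ is again minimal is not needed, since minimality of both $\mu$ and $\lambda$ is already part of the hypothesis of the corollary.
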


Let $(N,J,\ip)$ be a Hermitian nilmanifold, i.e. $J$ is an invariant complex structure
on $N$ and $\ip\in\cca(N,J)$.
\begin{definition}
Let $\Ric_{\ip}$ be the Ricci operator of $(N, \ip)$. The \textit{Hermitian Ricci operator}
is given by
$$
\Ricc_{\ip}:=\unm\left(\Ric_{\ip}-J\Ric_{\ip}J\right).
$$
\end{definition}

A metric $\ip\in \cca$ is called {\it minimal} if it minimizes the functional $\tr(\Ricc_{\ip})^2$
on the set of all compatible metrics with the same scalar curvature. We now rewrite
Theorem \ref{RS1} in geometric terms, by using the identification (\ref{metrcorch}).

\begin{theorem}\cite{canonical}\label{RS}
For $\ip\in \cca$, the following conditions are equivalent:
\begin{itemize}
\item[(i)] $\ip$ is minimal.

\item[(ii)] $\Ricc_{\ip}=cI+D$ for some $c\in \RR$, $D\in\Der(\ngo)$.
\end{itemize}
Moreover, there is at most one compatible left invariant metric on $(N, J)$ up to isometry
(and scaling) satisfying any of the above conditions.
\end{theorem}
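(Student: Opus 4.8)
The plan is to deduce Theorem \ref{RS} from Theorem \ref{RS1} through the change of viewpoint ``move the metric instead of the bracket'' set up in Section \ref{basic}. First I would realize $(N,J)$ as $(N_{\mu},J)$ for some $\mu\in\nca_{J}$; then $\cca(N_{\mu},J)=\Gl_{n}(\CC)\cdot\mu$, where the point $g\cdot\mu$ of the orbit represents the compatible metric $\la g\cdot,g\cdot\ra$ on $(N_{\mu},J)$ through the isometric isomorphism $(N_{\mu},J,\la g\cdot,g\cdot\ra)\cong(N_{g\cdot\mu},J,\ip)$ of (\ref{isom}). Since a Lie algebra isomorphism commuting with $J$ conjugates the Ricci operator and commutes with the orthogonal projection $S\mapsto\unm(S-JSJ)$ onto the symmetric endomorphisms commuting with $J$, it intertwines the Hermitian Ricci operators; hence $\scalar=\tr\Ricc$ and $\tr(\Ricc)^{2}$ evaluated on $(N_{\mu},J,\la g\cdot,g\cdot\ra)$ equal $\scalar(g\cdot\mu)$ and $\tr(\Ricc_{g\cdot\mu})^{2}$ respectively.

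Next I would combine this with the scale invariance of $F$ and the standard formula $\scalar(\lambda)=-\unc\|\lambda\|^{2}$ for left invariant metrics on nilpotent Lie groups: along the orbit $\Gl_{n}(\CC)\cdot\mu$, fixing the scalar curvature is the same as fixing $\|\lambda\|$, so minimality of the metric $\la g\cdot,g\cdot\ra$ on $(N_{\mu},J)$ is equivalent to $F|_{\Gl_{n}(\CC)\cdot\mu}$ attaining its minimum at $g\cdot\mu$. By the equivalences (i)--(iii) of Theorem \ref{RS1} this holds precisely when $\Ricc_{g\cdot\mu}=cI+D$ with $D\in\Der(\ngo,g\cdot\mu)$, and conjugating this identity back by $g^{-1}$ (which preserves $J$ and carries derivations of $g\cdot\mu$ to derivations of $\mu$) yields condition (ii) of Theorem \ref{RS} for $\la g\cdot,g\cdot\ra$. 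This gives (i)$\Leftrightarrow$(ii).

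For the ``at most one'' statement I would argue that if $\la g_{1}\cdot,g_{1}\cdot\ra$ and $\la g_{2}\cdot,g_{2}\cdot\ra$ are both minimal, then $g_{1}\cdot\mu$ and $g_{2}\cdot\mu$ are critical points of $F$ in the same orbit, so by the last assertion of Theorem \ref{RS1} (equivalently, Corollary \ref{CRS1}) there are $r\in\RR^{*}$ and $k\in\U(n)$ with $g_{2}\cdot\mu=rk\cdot(g_{1}\cdot\mu)$. Then $h:=(rkg_{1})^{-1}g_{2}\in\Aut(\ngo,\mu)\cap\Gl_{n}(\CC)$ exponentiates to a holomorphic automorphism of $(N_{\mu},J)$, and since $k\in\U(n)\subset\Or(2n)$ preserves $\ip$ a short computation gives $\la g_{2}\cdot,g_{2}\cdot\ra=r^{2}\,h^{*}\la g_{1}\cdot,g_{1}\cdot\ra$; hence the two minimal metrics are isometric isomorphic up to scaling.

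I do not expect a genuine obstacle here, since all the hard analysis (the moment-map and geometric invariant theory input) is already packaged in Theorem \ref{RS1}; the point requiring the most care is simply checking that the two pictures really match, that is, the naturality of the Hermitian Ricci operator and of $\scalar$ under isometric isomorphisms commuting with $J$, together with the normalization ``same scalar curvature'' $\leftrightarrow$ ``same norm''. Both are routine consequences of the standard curvature formulas for left invariant metrics, so the proof is essentially a translation.
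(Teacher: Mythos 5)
Your proposal is correct and follows exactly the route the paper itself indicates: Theorem \ref{RS} is presented there as Theorem \ref{RS1} ``rewritten in geometric terms'' via the identifications (\ref{metrcorch}) and (\ref{isom}), with the hard content deferred to \cite{canonical}. You simply carry out that translation in detail (naturality of $\Ricc$ under isomorphisms commuting with $J$, the normalization $\scalar=-\unc\|\cdot\|^2$, and the uniqueness via the $\RR^{*}\U(n)$ statement), all of which checks out.
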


Let $\ip\in \cca$ be a minimal metric with $\Ricc_{\ip}=cI+D$ for some $c\in \RR$, $D\in\Der(\ngo)$. \
We say that $\mu$ is of \textit{type} $\eigen$ if $\{k_i\}\subset \ZZ_{\geq 0}$ are the eigenvalues
of $D$ with multiplicities $\{d_i\}$  respectively and $\gcd(k_1,\ldots,k_r)=1$.

\begin{corollary}\cite{canonical}\label{CRS}
Let $J_1$, $J_2$ be two complex structures on $N$, and assume that they admit minimal
compatible metrics $\ip$ and $\ip'$, respectively. Then $J_1$ is equivalent to $J_2$ if and
only if there exists $\vp\in \Aut(\ngo)$ and $c>0$ such that $J_2 = \vp J_1 \vp^{-1}$ and
$$\la \vp X,\vp Y \ra' = c \la X, Y \ra, \quad \forall X,Y \in \ngo.$$
In particular, if $J_1$ and $J_2$ are equivalent, then their respective minimal compatible
metrics are necessarily isometric up to scaling.
\end{corollary}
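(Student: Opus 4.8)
The plan is to deduce Corollary~\ref{CRS} directly from Theorem~\ref{RS}, exploiting the uniqueness clause, exactly in the spirit of the abstract's remark that ``minimal metrics allow us to distinguish complex structures with Riemannian data''. One direction is almost a tautology: if $\vp\in\Aut(\ngo)$ satisfies $J_2=\vp J_1\vp^{-1}$, then by definition $J_1$ and $J_2$ are equivalent, and the scaling of the metric plays no role there. So the content is in the other direction.

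For the nontrivial direction, suppose $J_1$ and $J_2$ are equivalent, say $J_2=\psi J_1\psi^{-1}$ for some $\psi\in\Aut(\ngo)$. First I would transport the minimal metric $\ip$ for $J_1$ through $\psi$: define $\ip''$ on $(\ngo,\mu)$ by $\la X,Y\ra'':=\la\psi^{-1}X,\psi^{-1}Y\ra$. The key observation is that $(N,J_2,\ip'')$ is isometric-isomorphic to $(N,J_1,\ip)$ via $\psi$ (this is precisely the notion of isometric isomorphism introduced before), so all curvature quantities are intertwined by $\psi$; in particular the Ricci operators satisfy $\Ric_{\ip''}=\psi\,\Ric_{\ip}\,\psi^{-1}$, and since $\psi$ conjugates $J_1$ to $J_2$, also $\Ricc_{\ip''}=\psi\,\Ricc_{\ip}\,\psi^{-1}$. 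Because $\Ricc_\ip=cI+D$ with $D\in\Der(\ngo)$, conjugating by the Lie algebra automorphism $\psi$ again yields $cI+\psi D\psi^{-1}$ with $\psi D\psi^{-1}\in\Der(\ngo)$; hence $\ip''$ is a minimal compatible metric for $J_2$. Now $\ip'$ is \emph{also} a minimal compatible metric for $J_2$, so by the uniqueness clause of Theorem~\ref{RS} there is a Lie algebra automorphism $\sigma$ commuting appropriately with $J_2$ and a constant $c_0>0$ with $\ip''=c_0\la\sigma^{-1}\cdot,\sigma^{-1}\cdot\ra'$. Composing, $\vp:=\sigma^{-1}\psi$ is the desired automorphism: it conjugates $J_1$ to $J_2$ (since both $\psi$ and $\sigma$ do), and a direct substitution gives $\la\vp X,\vp Y\ra'=c\la X,Y\ra$ with $c=1/c_0$. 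The ``in particular'' statement is then immediate: $\vp$ is, up to the scaling $c$, an isometry between $\ip$ and $\ip'$.

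The one point requiring genuine care --- and the main obstacle --- is the precise bookkeeping of what ``unique up to isometry and scaling'' in Theorem~\ref{RS} gives us: one must check that the isometry realizing the uniqueness can be taken to be a Lie algebra automorphism (it can, because both metrics live on the \emph{same} nilpotent Lie algebra $(\ngo,\mu)$ and the underlying groups are simply connected, so a Riemannian isometry fixing the identity differentiates to a Lie algebra automorphism), and that it respects $J_2$ (which is built into Lauret's statement since both metrics are compatible with $J_2$ and minimality is a $J_2$-geometric condition). Once this is granted, the argument is a clean two-line composition of automorphisms; the rest is the standard fact that Riemannian curvature, and hence $\Ric$ and its $J$-commuting projection $\Ricc$, is natural under isometric isomorphisms, which is exactly the identification \eqref{isom} already recorded in the Preliminaries. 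I would also note that $c>0$ necessarily (it equals $1/c_0$ with $c_0>0$), so the last displayed equation makes sense as a genuine rescaled pullback of inner products.
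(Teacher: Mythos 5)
The paper gives no proof of this corollary (it is quoted from \cite{canonical}), but your argument is correct and is exactly the intended derivation: transport the minimal metric $\ip$ by the equivalence $\psi$, observe via $\Ricc_{\ip''}=\psi\Ricc_{\ip}\psi^{-1}=cI+\psi D\psi^{-1}$ that the transported metric is still minimal for $J_2$, and then invoke the uniqueness clause of Theorem~\ref{RS} (equivalently, Corollary~\ref{metmini}, which guarantees the comparison isometry $\sigma$ lies in $\U(n)\cap\Aut(\ngo)$ and hence commutes with $J_2$) to compose $\vp=\sigma^{-1}\psi$. Your flagged subtlety --- that the isometry realizing uniqueness may be taken to be an automorphism commuting with $J_2$ --- is precisely what the $\RR^{*}\U(n)\cdot\mu$ statement in Theorem~\ref{RS1} encodes, so the proof is complete as written.
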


By (\ref{metrcorch}) and (\ref{isom}), it is easy to see that two Hermitian nilmanifolds $\mu$
and $\lambda$ are isometric (i.e. if $(N_\mu, J, \ip)$ and $(N_\lambda, J, \ip)$ are isometric
isomorphic) if and only if they live in the same $\U(n)$-orbit. Corollary \ref{CRS} and
(\ref{metrcorch}) imply the following result.

\begin{corollary}\label{metmini}
If $\mu$ is a minimal Hermitian metric, then $\RR^{*}\U(n)\cdot\mu$ parameterizes all minimal
Hermitian metrics on $(N_\mu, J)$.
\end{corollary}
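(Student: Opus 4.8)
The plan is to chain together the two identifications already established in the excerpt — namely that $\Gl_n(\CC)\cdot\mu$ parameterizes \emph{all} compatible metrics on $(N_\mu,J)$, and that within such an orbit two metrics are isometric isomorphic precisely when they lie in the same $\U(n)$-orbit — and then invoke the uniqueness-up-to-scaling part of Theorem \ref{RS} (equivalently, Corollary \ref{CRS1}).

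First I would recall, from the discussion following \eqref{isom}, that since $\mu$ is assumed to be a minimal Hermitian metric, we have $\cca(N_\mu,J)=\Gl_n(\CC)\cdot\mu$; thus every compatible left invariant metric on $(N_\mu,J)$ is represented by some $\lambda=g\cdot\mu$ with $g\in\Gl_n(\CC)$, via the identification \eqref{metrcorch}. Next, suppose $\lambda\in\cca(N_\mu,J)$ is itself a minimal Hermitian metric. Then both $\mu$ and $\lambda$ are critical points of the functional $F$ of Theorem \ref{RS1} lying in the same orbit $\Gl_n(\CC)\cdot\mu=\Gl_n(\CC)\cdot\lambda$, so the final assertion of Theorem \ref{RS1} (all critical points in a given $\Gl_n(\CC)$-orbit lie in a single $\RR^{*}\U(n)$-orbit) gives $\lambda\in\RR^{*}\U(n)\cdot\mu$. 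This is exactly Corollary \ref{CRS1} applied to the pair $(\mu,\lambda)$. Conversely, if $\lambda\in\RR^{*}\U(n)\cdot\mu$, write $\lambda=c\,h\cdot\mu$ with $c\in\RR^{*}$ and $h\in\U(n)$; since $h\in\U(n)\subset\Gl_n(\CC)$, the metric $h\cdot\mu$ is compatible with $(N_\mu,J)$ and is isometric isomorphic to $\mu$ (same $\U(n)$-orbit), hence minimal, and rescaling by $c$ preserves minimality (the condition $\Ricc_{\ip}=cI+D$ in Theorem \ref{RS}(ii) is scale-invariant up to adjusting the constant). Therefore every element of $\RR^{*}\U(n)\cdot\mu$ is a minimal Hermitian metric on $(N_\mu,J)$, and combined with the previous paragraph this shows $\RR^{*}\U(n)\cdot\mu$ is exactly the set of all of them.

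The only genuinely delicate point is making precise the claim that rescaling a minimal metric by a positive constant again yields a minimal metric: one must check that if $\Ricc_{\mu}=cI+D$ then $\Ricc_{c'\cdot\mu}=c''I+D$ for a suitable $c''$ and the \emph{same} derivation $D$ — this follows from the homogeneity of the Ricci operator under scaling the bracket, which is standard (scaling $\mu$ by $t$ scales $\Ric_\mu$ by $t^2$, hence scales $\Ricc_\mu$ by $t^2$, leaving the "derivation part" unchanged after absorbing the factor into $c$). Everything else is a formal manipulation of the $\Gl_n(\CC)$- and $\U(n)$-orbit identifications already in place, so there is no real obstacle beyond bookkeeping. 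I would close by remarking that this corollary is the form in which the uniqueness of minimal metrics will actually be used in Sections \ref{result6} and \ref{result8}: to detect non-isomorphism one exhibits two minimal representatives that fail to lie in a common $\RR^{*}\U(n)$-orbit.
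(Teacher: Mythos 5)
Your argument is correct and follows essentially the same route as the paper, which derives the corollary from the uniqueness statement (Corollary \ref{CRS}, equivalently the ``Moreover'' clause of Theorem \ref{RS1}) together with the identification of isometric isomorphism classes with $\U(n)$-orbits via \eqref{metrcorch} and \eqref{isom}. You additionally spell out the converse inclusion (that every element of $\RR^{*}\U(n)\cdot\mu$ is again minimal, via invariance of the condition $\Ricc=cI+D$ under scaling and the $\U(n)$-action), a point the paper leaves implicit; this is a harmless and correct elaboration.
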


\begin{example}\label{ejmmin}
For $t\in (0,1]$, consider the $2$-step nilpotent Lie algebra whose bracket is given by
$$
\begin{array}{lll}
\mu_{t}(e_1,e_2)=\sqrt{t}e_5, & \mu_{t}(e_1,e_4)=\frac{1}{\sqrt{t}}e_6,\\
\mu_{t}(e_2,e_3)=-\frac{1}{\sqrt{t}}e_6, & \mu_{t}(e_3,e_4)=-\sqrt{t}e_5.
\end{array}
$$
Let
{\small
$$
\begin{array}{lll}
J=\left[\begin{smallmatrix} 0&-1&&&&\\ 1&0&&&&\\ &&0&-1&&\\
&&1&0&&\\ &&&&0&-1\\ &&&&1&0
\end{smallmatrix}\right], && \la e_i,e_j\ra=\delta_{ij}.
\end{array}
$$}

A straightforward verification shows that $J$ is an abelian complex structure on $N_{\mu_{t}}$ for all
$t$, $\ip$ is compatible with $(N_{\mu_{t}},J)$, and the Ricci operator is given by
{\small
$$\Ric_{\mu_{t}}=\left[
    \begin{array}{cc}
      -\frac{1}{2}\left(\frac{t^2+1}{t}\right)I_{4} &  \\
       & \begin{matrix}
              t & 0 \\
              0 & 1/t \\
            \end{matrix}\\
    \end{array}
  \right].
$$}
By definition, we have
{\small
$$\Ricc_{\mu_{t}}=\left[
    \begin{array}{cc}
      -\left(\frac{t^2+1}{2t}\right)I_{4} &  \\
       & \left(\frac{t^2+1}{2t}\right)I_{2} \\
    \end{array}
  \right]=\frac{t^2+1}{2t}\left(-3I+2
\left[\begin{smallmatrix} 1&&&&&\\ &1&&&&\\ &&1&&&\\
&&&1&&\\ &&&&2&\\ &&&&&2
\end{smallmatrix}\right]
\right),
$$}
and thus $\mu_{t}$ is minimal of type $(1<2;4,2)$ by Theorem \ref{RS}.
It follows from
$$\Ric_{\mu_{t}}|_{\ngo_2}=
\left[
  \begin{array}{cc}
    t & 0 \\
    0 & 1/t \\
  \end{array}
\right],
$$
that the Hermitian nilmanifolds $\{(N_{\mu_{t}}, J, \ip) : 0 < t \leq 1\}$  are pairwise
non-isometric. \linebreak Indeed, if there exists $c\in\RR^{*}$ and $\varphi\in \U(3)\subset\Or(6)$ such that
$c\mu_s = \varphi\cdot\mu_t$ (see Corolario \ref{metmini}), then $\varphi = \left[\begin{smallmatrix}
\varphi_1&\\&\varphi_2\end{smallmatrix}\right]\in \U(2)\times\U(1)$ (recall that it is of type (4,2)) and
\linebreak $c^2\Ric_{\mu_{s}}|_{\ngo_2} = \varphi_2\Ric_{\mu_{t}}|_{\ngo_2}\varphi_{2}^{-1}$, hence
$c^2\left[\begin{smallmatrix}s&\\&1/s\end{smallmatrix}\right] =
\left[\begin{smallmatrix}t&\\&1/t\end{smallmatrix}\right]$. By taking quotients of their eigenvalues
we deduce that $s^2=t^2$ or $s^2=1/t^2$, which gives $s=t$ if
$s,t\in (0,1]$. We therefore obtain a curve $\{(N_{\mu_{t}}, J) : 0 < t \leq 1\}$ of pairwise
non-isomorphic abelian complex nilpotent Lie groups, by the uniqueness in result Theorem \ref{RS}
(see \cite{praga} for more examples).
\end{example}

From the above results, the problem of distinguishing two complex structures can be stated
as follows: if we fix the nilpotent Lie group $N$ then the $\Gl_{2n}(\RR)$-invariants
give us all possible complex structures on $N$ (Definition \ref{holomiso}), and the
$\Or(2n)$-invariants distinguish their respective minimal metrics (if any), up to scaling
(Corollary \ref{CRS}). If we now fix a $2n$-dimensional vector space and vary the brackets, the $\Gl_{n}(\CC)$-invariants provide the posible compatible metrics with a given complex structure
(see identification (\ref{metrcorch})), and the $\U(n)$-invariants their respective minimal metrics
(if any), up to scaling (see Corollary \ref{metmini}). In the latter case, the above example shows
how to use one of the Riemannian invariants: the eigenvalues of the Ricci operator. Since this
is not always possible, in the next section we will introduce a new invariant applicable
to 2-step nilpotent Lie algebras.

%-----------------------------------------------------------------------------------------

\subsection{Pfaffian form}\label{Pfaffian}
With the purpose to differentiate Lie algebras, up to isomorphism, we assign to each one a unique
homogeneous polynomial called  the \textit{Pfaffian form}, and by Proposition \ref{isoforms}
we will use the known polynomial invariants to obtain curves or families of brackets in a
vector space given. We follow the notation used in \cite{pfaff}.

Let $\ngo$ be a real Lie algebra, with Lie bracket $\mu$, and fix an inner product $\ip$ on $\ngo$.
For each $Z\in\ngo$ consider the skew-symmetric $\RR$-linear transformation
$J_Z:\ngo\longrightarrow\ngo$ defined by
\begin{equation}\label{jota}
\langle J_ZX,Y\rangle=\langle \mu(X,Y),Z\rangle, \qquad\forall\; X,Y\in\ngo.
\end{equation}

If $\ngo$ and $\ngo^\prime$ are two real Lie algebras and $J$, $J'$ are the
corresponding maps, relative to the inner products $\ip$ and $\ip'$ respectively,
then it is easy to see that a linear map $B:\ngo \to \ngo^\prime$ is a Lie
algebra isomorphism if and only if
\begin{equation}\label{propJZ}
B^t J_{Z}^\prime B = J_{B^t Z}, \quad \forall\; Z\in\ngo^\prime,
\end{equation}
where $B^t:\ngo^\prime \to \ngo$ is given by $\la B^t X, Y\ra = \la X, BY\ra^\prime$ for all
$X\in \ngo^\prime$, $Y\in \ngo$.

Assume now that $\ngo$ is $2$-step nilpotent and the decomposition  $\ngo=\ngo_1\oplus\ngo_2$
satisfies $\ngo_2=[\ngo,\ngo]$. \ If $\la \ngo_1, \ngo_2\ra=0$,
then $\ngo_1$ is $J_Z$-invariant for any $Z$ and $J_Z=0$ if and only if $Z\in \ngo_1$. Under
these conditions, the {\it Pfaffian form} $f:\ngo_2\to \RR$ of $\ngo$ is defined by
$$
f(Z)=\Pf(J_Z|_{\ngo_1}), \quad Z\in \ngo_2,
$$
where \ $\Pf:\sog(\ngo_1,\RR)\to \RR$ is the {\it Pfaffian}, that is, the only polynomial
function satisfying $\Pf(B)^2=\det{B}$ for all $B\in\sog(\ngo_1,\RR)$ and
\ $\Pf(J)=1$ for
\begin{align}\label{Jstand}
J=\left[\begin{smallmatrix} 0&-1&&&&\\ 1&0&&&&\\ &&0&-1&&\\
&&1&0&&\\ &&&&\ddots&&&\\ &&&&&0&-1\\&&&&&1&0
\end{smallmatrix}\right].
\end{align}
Note that we need $\dim{\ngo_1}$ to be even in order to get $f\ne 0$. \ Furthermore, if
$\dim{\ngo_1} = 2m$ and $\dim{\ngo_2} = k$ then the Pfaffian form $f=f(x_1,\ldots,x_k)$
of $\ngo$ is a homogeneous polynomial of degree $m$ in $k$ variables with coefficients in $\RR$.

Let $P_{k,m}(K)$ denote the set of all homogeneous polynomials of degree $m$ in $k$
variables with coefficients in a field $K$.

\begin{definition}\label{equiv} {\rm For $f,g\in P_{k,m}(K)$, we say that $f$ is
{\it projectively equivalent} to $g$, and denote it by $f\prek g$, if there exists
$A\in\Gl_k(K)$ and $c\in K^*$ such that}
$$
f(x_1,...,x_k)=cg(A(x_1,...,x_k)).
$$
\end{definition}

\begin{remark}\label{prequiv}
If $f, g \in P_{k,m}(\RR)$, then
\[ f\preq g \Leftrightarrow
\begin{cases}
f\in\Gl_k(\RR)\cdot g, & \text{if $m$ is odd},\\
f\in\ \pm \Gl_k(\RR)\cdot g, & \text{if $m$ is even}.
\end{cases} \]
Recall that
$(A\cdot f)(x_1,\ldots, x_k) = f(A^{-1}(x_1,\ldots, x_k))$ for all $A\in\Gl_k(K)$,
$f\in P_{k,m}(K)$.
\end{remark}

\begin{proposition}\cite{pfaff}\label{isoforms}
Let $\ngo,\ngo'$ be two-step nilpotent Lie algebras over $\RR$.  If $\ngo$
and $\ngo'$ are isomorphic then $f\preq f'$, where $f$ and $f'$ are the Pfaffian
forms of $\ngo$ and $\ngo'$, respectively.
\end{proposition}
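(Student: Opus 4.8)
The plan is to reduce the statement to the transformation law \eqref{propJZ} for the maps $J_Z$ and then track how that law affects Pfaffians. Suppose $\alpha:\ngo\to\ngo'$ is a Lie algebra isomorphism between the two $2$-step nilpotent Lie algebras. First I would observe that since $\ngo_2=[\ngo,\ngo]$ and $\ngo'_2=[\ngo',\ngo']$ are intrinsically defined, $\alpha$ must carry $\ngo_2$ onto $\ngo'_2$; however $\alpha$ need not respect the chosen complements $\ngo_1$, $\ngo'_1$ nor the inner products. To fix this, I would replace $\alpha$ by $B := q\circ\alpha$ where $q$ is a suitable linear adjustment, or more directly argue as follows: write $\alpha$ in block form with respect to the decompositions and use that the Pfaffian form only depends on $\ngo$ up to isomorphism, so it suffices to produce \emph{some} isomorphism that is block-compatible. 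Concretely, since $\alpha(\ngo_2)=\ngo'_2$, post-composing with an orthogonal adjustment on $\ngo_1$-components does not change whether $f\preq f'$, so without loss of generality $B=\alpha$ may be taken to satisfy \eqref{propJZ}.

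Next I would invoke \eqref{propJZ}: for all $Z\in\ngo'$, $B^tJ'_Z B = J_{B^tZ}$. Restricting to $Z\in\ngo'_2$ and using that $\ngo_1$ is $J_W$-invariant for every $W$ (which holds because $\la\ngo_1,\ngo_2\ra=0$ in both algebras), I get $ (B^tJ'_ZB)|_{\ngo_1} = (J_{B^tZ})|_{\ngo_1}$, and here $B^tZ\in\ngo_2$ because $\alpha(\ngo_2)=\ngo'_2$ implies $\alpha^t$, hence $B^t$, maps $\ngo'_2$ into $\ngo_2$ (using $\ngo_1\perp\ngo_2$). Let $B_1:\ngo'_1\to\ngo_1$ be the relevant restricted/projected block of $B^t$ and $B_2:\ngo'_2\to\ngo_2$ the block of $B^t$ on $\ngo_2$; the identity becomes $B_1^t\,(J'_Z|_{\ngo'_1})\,B_1 = J_{B_2Z}|_{\ngo_1}$ for all $Z\in\ngo'_2$.

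Now I would take Pfaffians of both sides. Using the elementary identities $\Pf(C^tSC)=\det(C)\,\Pf(S)$ for $S$ skew-symmetric and $C$ square, I obtain
\[
\det(B_1)\cdot f'(Z) \;=\; f\big(B_2 Z\big), \qquad \forall\, Z\in\ngo'_2,
\]
where $f,f'$ are the Pfaffian forms (identifying $\ngo_1\cong\RR^{2m}$ via orthonormal bases so that $\Pf$ is the normalized one of \eqref{Jstand}). Since $B$ is invertible and maps $\ngo_2$ onto $\ngo'_2$, the block $B_2$ is invertible, so $A:=B_2\in\Gl_k(\RR)$ and $c:=\det(B_1)\in\RR^*$; thus $f'(Z)=c^{-1}f(AZ)$, i.e. $f\prek f'$ over $K=\RR$, which is exactly $f\preq f'$ by Definition \ref{equiv}. (If one wants the $\pm\Gl_k(\RR)$ refinement of Remark \ref{prequiv} in the even-degree case, note $c=\det B_1$ can be absorbed by rescaling $A$ when $m$ is odd, and contributes only a sign when $m$ is even.)

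The step I expect to be the main obstacle is the bookkeeping in the first paragraph: justifying that one may assume the isomorphism is compatible with the direct-sum decompositions and with the inner products well enough that \eqref{propJZ} can be applied cleanly to produce genuine \emph{invertible} blocks $B_1$ on $\ngo_1$ and $B_2$ on $\ngo_2$. The subtlety is that $\alpha$ generally mixes $\ngo_1$ into $\ngo_2$ (it need only preserve $\ngo_2$, not $\ngo_1$), so one must check that this mixing does not affect $J_Z|_{\ngo_1}$ — which it does not, precisely because $J_Z$ kills $\ngo_1$-directions when $Z\in\ngo_1$ and because only the $\ngo_1\to\ngo_1$ and $\ngo_2\to\ngo_2$ blocks survive after restricting the quadratic form $\la\mu(\cdot,\cdot),Z\ra$ to $\ngo_1\times\ngo_1$. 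Once that is pinned down, the Pfaffian computation is routine.
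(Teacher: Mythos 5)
Your overall strategy (apply \eqref{propJZ}, restrict to $\ngo_1$, take Pfaffians) is the right one; note that the paper does not prove this proposition itself but cites \cite{pfaff}, and its proof of the analogous Proposition \ref{PfafCC} follows exactly this pattern --- there, however, the isomorphism is block-diagonal by hypothesis, so the difficulty you flag in your first paragraph never arises. The genuine error in your write-up is the claim that $B^t$ maps $\ngo_2'$ into $\ngo_2$. This is false: orthogonality of the decompositions together with $B\ngo_2=\ngo_2'$ gives the \emph{other} inclusion $B^t\ngo_1'\subseteq\ngo_1$ (for $W\in\ngo_1'$ and $Y\in\ngo_2$ one has $\la B^tW,Y\ra=\la W,BY\ra'=0$ since $BY\in\ngo_2'\perp W$), so $B^t$ is block upper triangular, and in general $B^tZ$ has a nonzero $\ngo_1$-component for $Z\in\ngo_2'$. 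A concrete counterexample: the automorphism of $\hg_3$ sending $e_1\mapsto e_1+e_3$, $e_2\mapsto e_2$, $e_3\mapsto e_3$ satisfies $\alpha^te_3=e_1+e_3\notin\ngo_2$. Your ``without loss of generality $B$ is block-compatible'' reduction does not repair this and is itself unjustified: you never exhibit the adjustment $q$, and in any case \eqref{propJZ} holds automatically for \emph{every} Lie algebra isomorphism, so there is nothing to arrange there.

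The fix is the fact you mention only in passing at the end: $J_W=0$ for every $W\in\ngo_1$, because $\mu(X,Y)\in\ngo_2\perp\ngo_1$. Hence, writing $B^tZ=W_1+W_2$ with $W_i\in\ngo_i$, you get $J_{B^tZ}=J_{W_2}=J_{P_2B^tZ}$, where $P_2$ denotes the orthogonal projection onto $\ngo_2$; the correct matrix $A$ for Definition \ref{equiv} is therefore the block $P_2B^t|_{\ngo_2'}:\ngo_2'\to\ngo_2$, which is invertible precisely because $B^t$ is block upper triangular with invertible diagonal blocks. With this replacement, and after observing that both sides of \eqref{propJZ} kill $\ngo_2$ and have image in $\ngo_1$ (so that restricting to $\ngo_1$ and applying $\Pf(C^tSC)=\det(C)\Pf(S)$ is legitimate, with $C$ the $\ngo_1$-block of $B$), your computation goes through and yields $\det(C)\,f'(Z)=f\bigl(P_2B^tZ\bigr)$ for all $Z\in\ngo_2'$, hence $f\preq f'$.
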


The above proposition says that the projective equivalence class of the form
$f(x_1,\ldots,x_k)$ is an isomorphism invariant of the Lie algebra $\ngo$. Note that
if we do the composition $I\circ f(\mu)$ of the Pfaffian form $f(\mu)$ with an invariant
$I\in P_{k,m}(\RR)^{\Sl_k(\RR)}$ (the ring of invariant \linebreak polynomials), we obtain scalar
$\Sl_k(\RR)$-invariants. Moreover, if we consider quotients of same degree of the form
$\frac{I_1(f(\mu))}{I_2(f(\mu))}$  we obtain $\Gl_k(\RR)$-invariants (see Example \ref{ejmfaff}).

In what follows, we give some basic properties of the Pfaffian form and some invariants
for binary quartic forms.
\begin{itemize}
\item[(i)] If $A$ is a skew symmetric matrix of order $4\times4$, say
$$
A=\left[\begin{smallmatrix} 0&b_{12}&b_{13}&b_{14}\\
-b_{12}&0&b_{23}&b_{24}\\
-b_{13}&-b_{23}&0&b_{34}\\
-b_{14}&-b_{24}&-b_{34}&0
\end{smallmatrix}\right],$$
then $\Pf(A)=b_{12}b_{34}-b_{13}b_{24}+b_{14}b_{23}$.
\item[(ii)] $\Pf\left(\left[\begin{smallmatrix}
A_1& 0\\
0 & A_2
\end{smallmatrix}\right]\right) = \Pf(A_1)\Pf(A_2)$.
\item[(iii)] Let $p(x,y) = \sum_{i=0}^{4}a_{i}x^{4-i}y^{i} \in P_{2,4}(\RR)$. Define
\begin{align*}
& S(p):= a_0 a_4 - 4a_1 a_3 + 3a_2^2.\\
& T(p):= a_0 a_2 a_4 - a_0 a_3^2 + 2a_1 a_2 a_3 - a_1^2 a_4 - a_2^3.
\end{align*}
We have that $S$ and $T$ are $\Sl_2(\RR)$-invariant (see for instance \cite{invariant}), that is \linebreak
$S(g\cdot p) = S(p)$ and $T(g\cdot p)= T(p)$ for any $p\in P_{2,4}(\RR)$, $g\in\Sl_2(\RR)$. \linebreak
Moreover, $S(c p) = c^2 S(p)$ and $T(c p)= c^3 T(p)$ for all $c\in \RR$.
\end{itemize}

\begin{example}\label{ejmfaff}
Let $\ngo$ be the $2$-step nilpotent Lie algebra whose bracket is defined, for any $t\in \RR$, by
\begin{align*}
& \lambda_t(X_1, X_3) = - \lambda_t(X_2, X_4) = Z_1,\\
& \lambda_t(X_1, X_4) = \lambda_t(X_2, X_3) = \lambda_t(X_5, X_8) = \lambda_t(X_6, X_7) = - Z_2,\\
& \lambda_t(X_5, X_7) = - \lambda_t(X_6, X_8) = tZ_1.
\end{align*}
Consider the inner product $\la X_i,X_j\ra= \la Z_i,Z_j\ra=\delta_{ij}$. In this case
$\ngo_1=\la X_1,...,X_8\ra_{\RR}$ and $\ngo_2=\la Z_1,Z_2\ra_{\RR}$.
If $Z = x Z_1 + y Z_2$, with $x,y\in\RR$, then
$$
J_Z|_{\ngo_1}=\left[\begin{smallmatrix}
& & -x & y & & & &\\
& & y & x & & & &\\
x & -y &  &  & & & &\\
-y & -x & & & & & &\\
& & & & & & -tx & y\\
& & & & & & y & tx\\
& & & & tx & -y & &\\
& & & & -y & -tx & &\\
\end{smallmatrix}\right].$$
By definition (see also properties (i) and (ii) above), the Pfaffian form of $\ngo$ is
$$f_t := f(\lambda_t) = (x^2 + y^2)(t^2x^2 + y^2)=
t^2 x^4 + (t^2+1) x^2 y^2 + y^4.$$
We claim that if $f_t\preq f_s$ then $t=s$ for all $t,s$ in any of the following intervals:
$$(-\infty,-1], [-1,0], [0,1], [1,\infty).$$ Indeed, by assumption, there exists $c\in\RR^*$
and $g\in \Gl_2(\RR)$ such that $c \ g\cdot f_s = f_t$. From this we deduce that there exists
$\widetilde{c}\in\RR^*$ and $\widetilde{g}\in \Sl_2(\RR)$ such that
$\widetilde{c} \ \widetilde{g}\cdot f_s = f_t$. For all $t\in\RR$, define the function (see (iii) above)
$$h(t):= \frac{S(f_t)^3}{T(f_t)^2}.$$
It follows that
$$h(t) = \frac{S(f_t)^3}{T(f_t)^2} =
\frac{S(\widetilde{c} \ \widetilde{g}\cdot f_s)^3}{T(\widetilde{c} \ \widetilde{g}\cdot f_s)^2} =
\frac{\widetilde{c}^6 \ S(\widetilde{g}\cdot f_s)^3}{\widetilde{c}^6 \ T(\widetilde{g}\cdot f_s)^2} =
\frac{S(f_s)^3}{T(f_s)^2} = h(s).$$
It follows that
$$h(t)= \frac{(3t^4+7t^2+3)^3}{(t^2+1)^2(t^2+t+1)^2(t^2-t+1)^2}$$
Since the derivative of $h(t)$ only vanishes at $-1, 0, 1$, we conclude that $h$ is injective on
any of the intervals mentioned above. Proposition \ref{isoforms} now shows that $\{(\ngo, \lambda_t): t\in[1,\infty)\}$
(or $t$ in any of the other intervals) is a pairwise non-isomorphic family of Lie algebras.
\end{example}

If we take $\Gl_n(\CC):=\{g\in\Gl_{2n}(\RR) : gJ = Jg\}$, where $J$ is
given by (\ref{Jstand}), we can state the analogue of Proposition \ref{isoforms},
which will be crucial in Section \ref{result6}.

\begin{proposition}\label{PfafCC}
Suppose that $\ngo=\ngo_1\oplus\ngo_2$, with $\dim{\ngo_1} = 2p$ and $\dim{\ngo_2} = 2q$,
and $J\ngo_{i}=\ngo_{i}$. Assume $\mu, \lambda\in\Lambda^2\ngo_1^*\otimes\ngo_2$ satisfy
$\mu(\ngo_1,\ngo_1)=\lambda(\ngo_1,\ngo_1)=\ngo_2$. If $\lambda\in \Gl_{n}(\CC)\cdot \mu$ (n=p+q),
then $$f(\lambda)\in \RR_{>0}\Gl_{q}(\CC)\cdot f(\mu),$$
where $f(\mu)$, $f(\lambda)$ are the Pfaffian forms of $(\ngo, \mu)$ and $(\ngo, \lambda)$,
respectively.
\end{proposition}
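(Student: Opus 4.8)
The plan is to translate the hypothesis $\lambda = g\cdot\mu$ with $g\in\Gl_n(\CC)$ into a statement about the maps $J_Z$, and then read off what this says about Pfaffians. First I would observe that since $J\ngo_i = \ngo_i$, any $g\in\Gl_n(\CC) = \{g : gJ=Jg\}$ need not preserve the decomposition $\ngo_1\oplus\ngo_2$ a priori; however, because $\mu(\ngo_1,\ngo_1) = \lambda(\ngo_1,\ngo_1) = \ngo_2 = [\ngo,\ngo]$ for both brackets, the isomorphism $g$ must send derived algebra to derived algebra, so $g\ngo_2 = \ngo_2$. Moreover $\mu$ and $\lambda$ are $2$-step, and with $\ngo_1\perp\ngo_2$ the complement $\ngo_1$ can be characterized (up to the choice of the metric) as $\ngo_2^\perp$; but $g$ is generally not orthogonal, so $g\ngo_1 \ne \ngo_1$ in general. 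The standard fix (as in the proof of Proposition \ref{isoforms} in \cite{pfaff}) is to write $g$ in block form with respect to $\ngo_1\oplus\ngo_2$ as $g = \left[\begin{smallmatrix} g_1 & 0 \\ * & g_2\end{smallmatrix}\right]$, where $g_1\in\Gl(\ngo_1)$, $g_2\in\Gl(\ngo_2)$, and the lower-left block is irrelevant for the computation of $J_Z$ since $J_Z$ kills $\ngo_1$-components when paired against $\ngo_1$. Crucially, because $gJ = Jg$ and $J\ngo_i=\ngo_i$, both diagonal blocks commute with $J|_{\ngo_i}$, i.e. $g_1\in\Gl_p(\CC)$ and $g_2\in\Gl_q(\CC)$.

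Next I would use the transformation law for $J_Z$ under isomorphisms. From \eqref{propJZ}, a Lie algebra isomorphism $B:(\ngo,\lambda)\to(\ngo,\mu)$ satisfies $B^t (J^\mu_Z) B = J^\lambda_{B^t Z}$ for all $Z$. Taking $B = g^{-1}$ (the isomorphism $(\ngo,\lambda)\to(\ngo,\mu)$, since $\lambda = g\cdot\mu$), and restricting to $\ngo_1$: for $Z\in\ngo_2$ we get, using the block structure, $(g_1^{-1})^t\, J^\mu_{(g_2^{-1})^t Z}|_{\ngo_1}\, g_1^{-1} = J^\lambda_{Z}|_{\ngo_1}$, where I have used that the lower-left block of $g^{-1}$ contributes nothing when both sides are restricted to $\ngo_1$ and that $g^{-t}Z$ has $\ngo_2$-component $(g_2^{-1})^t Z$ (the $\ngo_1$-component of $g^{-t}Z$ lies in $\ngo_1$, on which $J^\mu$ vanishes). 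Now apply the Pfaffian: $\Pf(J^\lambda_Z|_{\ngo_1}) = \det(g_1^{-1})\cdot\Pf(J^\mu_{(g_2^{-1})^t Z}|_{\ngo_1})$, using $\Pf(A^t C A) = \det(A)\Pf(C)$ for $A$ invertible and $C$ skew-symmetric. In the notation of Pfaffian forms this reads
$$
f(\lambda)(Z) = \det(g_1)^{-1}\, f(\mu)\big((g_2^{-1})^t Z\big), \qquad Z\in\ngo_2.
$$
Since $g_1\in\Gl_p(\CC)$ and $J$ is the standard complex structure, $\det_\RR(g_1) = |\det_\CC(g_1)|^2 > 0$; hence $\det(g_1)^{-1}\in\RR_{>0}$. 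Setting $A := (g_2^{-1})^t$, we have $A\in\Gl_q(\CC)$ (the transpose-inverse of a complex-linear map is complex-linear, since $J$ is orthogonal so $J^t = -J = J^{-1}$ and commuting with $J$ is preserved under transpose-inverse). Therefore $f(\lambda) = c\cdot (A^{-1}\cdot f(\mu))$ — reading the action as $(A^{-1}\cdot h)(Z) = h(AZ)$ — with $c\in\RR_{>0}$ and $A^{-1}\in\Gl_q(\CC)$, which is exactly $f(\lambda)\in\RR_{>0}\Gl_q(\CC)\cdot f(\mu)$.

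The main obstacle, and the step requiring the most care, is justifying the block-triangular form of $g$ and tracking what happens with the metric: $J_Z$ is defined relative to a fixed inner product with $\ngo_1\perp\ngo_2$, but the \emph{same} inner product is used for both $\mu$ and $\lambda$, while $g$ is not an isometry, so one must verify that the off-diagonal block of $g$ genuinely drops out of the computation on $\ngo_1$ and that $g^t Z$ (or $g^{-t}Z$) has the claimed $\ngo_2$-component. The cleanest way to handle this is to note $\ngo_2 = [\ngo,\ngo]$ is canonical, so $g\ngo_2 = \ngo_2$ forces the upper-right block of $g$ to vanish, giving $g = \left[\begin{smallmatrix} g_1 & 0 \\ h & g_2\end{smallmatrix}\right]$; then $g^t = \left[\begin{smallmatrix} g_1^t & h^t \\ 0 & g_2^t\end{smallmatrix}\right]$, so for $Z\in\ngo_2$, $g^t Z = h^t Z + g_2^t Z$ with $h^t Z\in\ngo_1$ and $g_2^t Z\in\ngo_2$; since $J^\mu_{W}|_{\ngo_1} = 0$ for $W\in\ngo_1$, only the $g_2^t Z\in\ngo_2$ part survives. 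A symmetric remark handles $g^{-1}$. The remaining subtlety — that $J$ restricted to each $\ngo_i$ is conjugate to the standard block form so that "complex-linear" really means membership in $\Gl_p(\CC)$ resp. $\Gl_q(\CC)$, and that $\det_\RR > 0$ — is routine linear algebra over $\CC$.
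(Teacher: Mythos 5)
Your proposal is correct and follows essentially the same route as the paper's proof: block-decompose $g$, apply the transformation law (\ref{propJZ}), take Pfaffians via $\Pf(A^tCA)=\det(A)\Pf(C)$, and observe that $\det g_1>0$ and that the $\ngo_2$-block is $\CC$-linear; your extra care with the lower-triangular block (the paper simply asserts $g\ngo_1=\ngo_1$) is a refinement of, not a departure from, that argument. One harmless index slip: with $B=g^{-1}$ the law (\ref{propJZ}) yields $J^{\lambda}_{Z}|_{\ngo_1}=(g_1^{-1})^t\,J^{\mu}_{g_2^tZ}|_{\ngo_1}\,g_1^{-1}$ rather than $J^{\mu}_{(g_2^{-1})^tZ}$, but since $\Gl_q(\CC)$ is a group this does not affect the conclusion $f(\lambda)\in\RR_{>0}\Gl_q(\CC)\cdot f(\mu)$.
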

\begin{proof}
Let $\hg:=(\ngo,\mu)$, $\hg^\prime:=(\ngo,\lambda)$ and $J_\mu$, $J_\lambda$ the corresponding
maps, relative to the inner products on $\ngo$ (see (\ref{jota})). Suppose that
$g\cdot\mu = \lambda$ with $g\in\Gl_n(\CC)$ (i.e. $g\in\Gl_{2n}(\RR)$, $gJ=Jg$).
By assumption, $g = \left[\begin{smallmatrix}g_1&\\&g_2\end{smallmatrix}\right]\in \Gl_p(\CC)\times\Gl_q(\CC)$
and $g:\hg \to \hg^\prime$ is a Lie algebra isomorphism satisfying
\ $g\ngo_1=\ngo_1$ and $g\ngo_2=\ngo_2$. It follows from (\ref{propJZ}) that
$$g^t J_{\lambda}(Z) g = J_{\mu}(g^t Z), \quad \forall Z\in\ngo_1,$$
and since the subspaces $\ngo_1$ and $\ngo_2$ are preserved by $g$ y $g^t$ we have that
$$f^\prime(Z)=cf(g_{2}^t Z),$$
where $c^{-1}=\det g_1 > 0$ ($\Gl_p(\CC)$ is connected) and
$g_{2}^t:\lambda(\ngo_1,\ngo_1) \to \mu(\ngo_1,\ngo_1)$. It is clear that $g_2^t\in \Gl_{2q}(\RR)$ and
satisfies
$$\la J g_{2}^t Z, Y\ra=\la g_{2}^t Z, -JY\ra=\la Z, g_2(-JY)\ra=\la Z, -J g_2 Y\ra=
\la J Z, g_2Y\ra=\la g_{2}^t J Z, Y\ra.$$
Thus $g_{2}^t\in\Gl_q(\CC)$ and we conclude that $f(\lambda)\in \RR_{>0}\Gl_{q}(\CC)\cdot f(\mu)$.
\end{proof}

We end this section with an example of two homogeneous polynomials that are projectively equivalent
over $\RR$ but not over $\CC$ (in the sense of Proposition \ref{PfafCC}).

\begin{example}
In $\hg_5\times \RR$, define the Lie brackets $\mu^{+}$ and $\mu^{-}$ by
$$\mu^{\pm}(e_1,e_2)=e_6, \quad \mu^{\pm}(e_3,e_4)=\pm e_6.$$
Consider the inner product $\la e_i, e_j\ra= \delta_{ij}$. If $Z=x e_6$, with $x\in\RR$, then
\begin{align*}
\begin{array}{lll}
J_Z^{+}|_{\ngo_1}=\left[\begin{smallmatrix}
0&-x&&\\
x&0&&\\
&&0&-x\\
&&x&0
\end{smallmatrix}\right], &&
J_Z^{-}|_{\ngo_1}=\left[\begin{smallmatrix}
0&-x&&\\
x&0&&\\
&&0&x\\
&&-x&0
\end{smallmatrix}\right].
\end{array}
\end{align*}
Hence $f(\mu^{+})=x^2$ and $f(\mu^{-})=-x^2$. It follows that $f(\mu^{-})\preq f(\mu^{+})$ but
$$f(\mu^{-})\notin \RR_{>0}\U(1)\cdot f(\mu^{+}).$$ Recall that $\Gl_1(\CC)=\RR_{>0}\U(1)$.
\end{example}

%===============================================================================================

\section{Minimal metrics on $6$-dimensional abelian complex nilmanifolds}\label{result6}

The classification of 6-dimensional nilpotent real Lie algebras admitting a complex
structure was given in \cite{strcomplx}, and the abelian case in \cite{abelstruc}. Lately,
A. Andrada, M.L. Barberis and I.G. Dotti in \cite{AndBrbDtt} gave a classification of all Lie
algebras admitting an abelian complex structure; furthermore, they give a parametrization, on each
Lie algebra, of the space of abelian structures up to holomorphic isomorphism. In particular,
there are three nilpotent Lie algebras carrying curves of non-equivalent structures. \ Based on
this parametrization, we study the existence of minimal metrics on each of these complex nilmanifolds
(see Theorem \ref{result4}), and provide an alternative proof of the
pairwise non-isomorphism between the structures.

The classification in \cite{AndBrbDtt} fix the Lie algebra and varies the complex structure.
For example, on the Lie algebra $\hg_3\times\hg_3$ they found the curve $J_s$ of abelian complex
structures defined by $J_s e_1 = e_2, J_s e_3 = e_4, J_s e_5 = s e_5 + e_6$, $s\in\RR$, and fix
the bracket $[e_1, e_2] = e_5$, $[e_3, e_4] = e_6$. We now fix the complex structure and varies
the bracket as follows.

For $\ngo=\vg_1\oplus\vg_2$, with $\vg_1=\RR^4$ and $\vg_2=\RR^2$, \ consider the vector space
$\Lambda^2\vg_1^*\otimes\vg_2$ of all skew symmetric bilinear maps $\mu:\vg_1\times\vg_1\to\vg_2$.
Any $6$-dimensional $2$-step nilpotent Lie algebra with $\dim{\mu(\ngo,\ngo)}\leq 2$ can be modelled
in this way. \ Fix a basis of $\ngo$, say $\{ e_1,\ldots,e_6\}$, such that
$\vg_1=\la e_1,...,e_4\ra_{\RR}$, $\vg_2=\la e_5,e_6\ra_{\RR}$. The complex structure and the
compatible metric will be always defined by
\begin{align}\label{Jstand2}
\begin{array}{lll}
J:=\left[\begin{smallmatrix} 0&-1&&&&\\ 1&0&&&&\\ &&0&-1&&\\
&&1&0&&\\ &&&&0&-1\\ &&&&1&0
\end{smallmatrix}\right], && \la e_i,e_j\ra:=\delta_{ij}.
\end{array}
\end{align}

\begin{proposition}\label{holomiso2}
Let $(N_{\widetilde{\mu}}, \widetilde{J})$ be a complex nilmanifold, with
$\widetilde{\mu}\in\Lambda^2\vg_1^*\otimes\vg_2$. If there exists $g\in\Gl_{6}(\RR)$ such that
$g\widetilde{J}g^{-1} = J$, then $(N_{\widetilde{\mu}}, \widetilde{J})$ and $(N_{g\cdot\widetilde{\mu}}, J)$
are holomorphically isomorphic.
\end{proposition}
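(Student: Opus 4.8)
The plan is to use the definition of holomorphic isomorphism (Definition \ref{holomiso}) directly, with the map $g$ playing the role of the Lie algebra isomorphism between the two brackets. First I would observe that $g:\ngo\to\ngo$ is, by hypothesis, an invertible linear map, and that by the construction of the $\Gl_n(\CC)$-action on $V=\Lambda^2\ngo^*\otimes\ngo$ recalled in Section \ref{basic} (namely $g\cdot\widetilde\mu(\cdot,\cdot)=g\widetilde\mu(g^{-1}\cdot,g^{-1}\cdot)$), the map $g$ is automatically a Lie algebra isomorphism $g:(\ngo,\widetilde\mu)\to(\ngo,g\cdot\widetilde\mu)$; this holds for any $\widetilde\mu\in\nca$, and since $\widetilde\mu\in\Lambda^2\vg_1^*\otimes\vg_2$ is a $2$-step nilpotent bracket it lies in $\nca$. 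Concretely, for all $X,Y\in\ngo$ one has $(g\cdot\widetilde\mu)(gX,gY)=g\,\widetilde\mu(g^{-1}gX,g^{-1}gY)=g\,\widetilde\mu(X,Y)$, which is exactly the statement that $g$ intertwines the two brackets.

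The second point is to check the compatibility of $g$ with the complex structures, i.e. that $J=g\widetilde Jg^{-1}$, which is precisely the hypothesis of the proposition. Combining the two observations: $g$ is a Lie algebra isomorphism from $(\ngo,\widetilde\mu)$ to $(\ngo,g\cdot\widetilde\mu)$ satisfying $J=g\widetilde Jg^{-1}$, so by Definition \ref{holomiso} the pairs $(N_{\widetilde\mu},\widetilde J)$ and $(N_{g\cdot\widetilde\mu},J)$ are holomorphically isomorphic. One may also phrase the conclusion via the earlier Proposition (the one stating that two complex nilmanifolds $\mu,\lambda$ are holomorphically isomorphic iff $\lambda\in\Gl_n(\CC)\cdot\mu$): since $g\widetilde Jg^{-1}=J$ means $g\in\Gl_n(\CC)$ for the chosen $J$, the bracket $g\cdot\widetilde\mu$ lies in the $\Gl_n(\CC)$-orbit of $\widetilde\mu$ relative to $J$, giving the same conclusion.

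There is essentially no obstacle here; the only mild subtlety is bookkeeping about \emph{which} complex structure is being used when we invoke the $\Gl_n(\CC)$-action. The group $\Gl_n(\CC)=\{h\in\Gl_{2n}(\RR):hJ=Jh\}$ is defined relative to the fixed standard $J$, whereas the bracket $\widetilde\mu$ is a priori compatible with a different complex structure $\widetilde J$. The resolution is simply to note that $g$ conjugates $\widetilde J$ to $J$, so $g\cdot\widetilde\mu$ is integrable with respect to $J$ (integrability of $\widetilde\mu$ with respect to $\widetilde J$, equation \eqref{integral}, transports under $g$ to integrability of $g\cdot\widetilde\mu$ with respect to $g\widetilde Jg^{-1}=J$), and hence $(N_{g\cdot\widetilde\mu},J)$ is a genuine complex nilmanifold. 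With that remark in place the proof is a one-line application of the definitions, and I would write it as such.
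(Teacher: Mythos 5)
Your main argument is correct and is exactly what the paper has in mind: the paper states Proposition \ref{holomiso2} without proof, treating it as immediate from Definition \ref{holomiso} together with the identity $(g\cdot\widetilde{\mu})(gX,gY)=g\,\widetilde{\mu}(X,Y)$, which is precisely your first two paragraphs. One caution about your closing ``alternative phrasing'': the hypothesis $g\widetilde{J}g^{-1}=J$ does \emph{not} imply $gJ=Jg$ (that would require $\widetilde{J}=J$), so $g$ need not lie in $\Gl_n(\CC)$, and the earlier proposition characterizing holomorphic isomorphism by $\lambda\in\Gl_n(\CC)\cdot\mu$ does not apply here --- that statement concerns two brackets paired with the \emph{same} fixed $J$ via the identification (\ref{metrJ}), whereas here the two pairs carry different complex structures $\widetilde{J}$ and $J$. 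Drop that remark (or correct it) and keep the direct definitional argument, which stands on its own.
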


Returning to the above example, by choosing
\begin{align*}
g=\left[\begin{smallmatrix} 1&0&&&&\\ 0&1&&&&\\ &&1&0&&\\
&&0&1&&\\ &&&&1&-s\\ &&&&0&1
\end{smallmatrix}\right],
\end{align*}
we have $g J_{s} g^{-1} = J$, and therefore $(N_{[\cdot, \cdot]}, J_{s})$ and $(N_{\mu_3}, J)$
are holomorphically isomorphic by Proposition \ref{holomiso2}, where now the bracket is given
by $\mu_3(e_1, e_2) = e_5$ and $\mu_3(e_3, e_4) = -s e_5 + e_6$ with $s\in \RR$.
By arguing as above for each item in \cite[Theorem 3.5.]{AndBrbDtt}, we have obtained
Table \ref{tablecorch}.

\begin{table}
\centering
\renewcommand{\arraystretch}{1.6}
\begin{tabular}{|c|c|}\hline
$\mathbf{\ngo}$ & \textbf{Bracket} \\ \hline\hline
$\ngo_1:=\mathfrak{h}_3\times \RR^3$ & $\mu_1(e_1,e_2)=e_6$ \\ \hline
$\ngo_2:=\mathfrak{h}_5\times \RR$ & $\mu_2^{\pm}(e_1,e_2)=e_6$, \ $\mu_2^{\pm}(e_3,e_4)=\pm e_6$\\ \hline
$\ngo_3:=\mathfrak{h}_3\times \mathfrak{h}_3$ &
$\mu_3^{s}(e_1,e_2)=e_5$, \ $\mu_3^{s}(e_3,e_4)= - se_5 + e_6$ \\
& $s\in\RR$ \\ \hline
$\ngo_4:=\mathfrak{h}_3(\CC)$ &
$\mu^{t}_4(e_1,e_2)=\sqrt{t}e_5$, \ $\mu^{t}_4(e_1,e_4)=\frac{1}{\sqrt{t}}e_6$ \\ %\hline
 & $\mu^{t}_4(e_2,e_3)= - \frac{1}{\sqrt{t}}e_6$, \ $\mu^{t}_4(e_3,e_4)= - \sqrt{t}e_5$ \\
& $t\in (0,1]$ \\ \hline
$\ngo_5$ & $\mu_5(e_1,e_2)= e_5$, \ $\mu_5(e_1,e_4)= - e_6$ \\
& $\mu_5(e_2,e_3)= e_6$ \\ \hline
$\ngo_6$ & $\mu_6(e_1,e_2)= - e_3$, \ $\mu_6(e_1,e_4)= - e_6$ \\
& $\mu_6(e_2,e_3)= e_6$ \\ \hline
$\ngo_7$ & $\mu_7^{t}(e_1,e_2)= - e_4$, \ $\mu_7^{t}(e_1,e_3)=\sqrt{t}e_5$ \\
& $\mu_7^{t}(e_2,e_4)= \sqrt{t}e_5$, \ $\mu_7^{t}(e_1,e_4)= - \frac{1}{\sqrt{t}}e_6$ \\
& $\mu_7^{t}(e_2,e_3)= \frac{1}{\sqrt{t}}e_6$, \ $t\in (0,1]$ \\ \cline{2-2}
& $\widetilde{\mu}_7^{t}(e_1,e_2)= - e_4$, \ $\widetilde{\mu}_7^{t}(e_1,e_3)=
 \sqrt{-t}e_5$ \\
& $\widetilde{\mu}_7^{t}(e_2,e_4)= \sqrt{-t}e_5$, \ $\widetilde{\mu}_7^{t}(e_1,e_4)= \frac{1}{\sqrt{-t}}e_6$ \\
& $\widetilde{\mu}_7^{t}(e_2,e_3)= - \frac{1}{\sqrt{-t}}e_6$, \ $t\in [-1,0)$ \\ \hline
\end{tabular}
\vspace{0.3cm}
\caption{Abelian complex nilmanifolds of dimension $6$.}\label{tablecorch}
\end{table}
\begin{remark}
In the classification given in \cite{AndBrbDtt}, they incorrectly claim that the curves of
structures $J_t^1$ and $J_t^2$ on $\ngo_4$ are non-equivalent (see a corrected version at arXiv:0908.3213).
Indeed, the matrix $g$ defined in (\ref{matriz}) is an automorphism of $\ngo_4$ and $gJ_{t}^{1}g^{-1} = J_t^2$,
hence $J_t^1$ and $J_t^2$ are equivalent. Note that in
Table \ref{tablecorch} only appears a `curve' (it is proved below) of brackets on $\ngo_4$, which is due
to the following proposition and Theorem \ref{result4}. The brackets $\mu^{1,t}_4$ and $\mu^{2,t}_4$
are obtained  from the curves of structures $J_t^1$ and $J_t^2$, respectively.
\end{remark}

\begin{proposition}
\ $\mu^{2,t}_4 \in \U(2)\times\U(1)\cdot \mu^{1,t}_4$ for all $t\in (0,1]$, where the brackets
$\mu^{1,t}_4, \mu^{2,t}_4$ on $\ngo_4$ are given by
{\small
\begin{align*}
&\mu^{1,t}_4(e_1,e_2)=\sqrt{t}e_5, \ \ \mu^{1,t}_4(e_1,e_4)=\frac{1}{\sqrt{t}}e_6, \ & &
\mu^{2,t}_4(e_1,e_3)=\sqrt{t}e_5, \ \ \mu^{2,t}_4(e_2,e_4)=\sqrt{t}e_5,\\
&\mu^{1,t}_4(e_2,e_3)= - \frac{1}{\sqrt{t}}e_6, \ \ \mu^{1,t}_4(e_3,e_4)= - \sqrt{t}e_5. & &
\mu^{2,t}_4(e_1,e_4)= - \frac{1}{\sqrt{t}}e_6, \ \ \mu^{2,t}_4(e_2,e_3)=\frac{1}{\sqrt{t}}e_6.
\end{align*}}
\end{proposition}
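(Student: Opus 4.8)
The plan is to exhibit an explicit block matrix $\varphi = \left[\begin{smallmatrix}\varphi_1&\\&\varphi_2\end{smallmatrix}\right] \in \U(2)\times\U(1)$ with $\varphi_1\in\U(2)\subset\Or(4)$ acting on $\vg_1=\la e_1,\dots,e_4\ra$ and $\varphi_2\in\U(1)\subset\Or(2)$ acting on $\vg_2=\la e_5,e_6\ra$, and to check directly that $\varphi\cdot\mu^{1,t}_4 = \mu^{2,t}_4$ for every $t\in(0,1]$. Since the change-of-bracket action is $(\varphi\cdot\mu)(X,Y)=\varphi\,\mu(\varphi^{-1}X,\varphi^{-1}Y)$, this amounts to a finite linear-algebra verification on the six basis brackets. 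Because both brackets have the same two structure vectors $e_5$ (with coefficient $\sqrt t$) and $e_6$ (with coefficient $1/\sqrt t$), I expect $\varphi_2$ to be either the identity or a simple $\pm$-sign/swap on $\{e_5,e_6\}$; it will be forced once $\varphi_1$ is chosen, by matching the two "$e_5$-brackets" and the two "$e_6$-brackets" separately.

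First I would read off from the two lists which pairs of $\vg_1$-vectors are paired by each bracket. For $\mu^{1,t}_4$ the $e_5$-plane pairings are $(e_1,e_2)$ and $(e_3,e_4)$ (with opposite signs), while for $\mu^{2,t}_4$ they are $(e_1,e_3)$ and $(e_2,e_4)$ (with the same sign); the $e_6$-plane pairings are $(e_1,e_4),(e_2,e_3)$ in both cases but with sign patterns $(+,-)$ versus $(-,+)$. This tells me to look for an orthogonal $\varphi_1$ that, up to relabeling, sends the ordered frame $(e_1,e_2,e_3,e_4)$ to a frame in which $e_1\wedge e_2 \mp e_3\wedge e_4$ becomes proportional to $e_1\wedge e_3 + e_2\wedge e_4$ and simultaneously converts the $e_6$-bivector correctly. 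A natural candidate is a permutation-type matrix such as $e_1\mapsto e_1,\ e_2\mapsto e_3,\ e_3\mapsto \pm e_2,\ e_4\mapsto \pm e_4$ (signs to be pinned down), possibly composed with a sign on $e_6$; one then verifies it lies in $\U(2)$, i.e. commutes with $J|_{\vg_1}=\left[\begin{smallmatrix}0&-1&&\\1&0&&\\&&0&-1\\&&1&0\end{smallmatrix}\right]$, which constrains the signs and is the reason the naive coordinate permutation must be adjusted.

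The main obstacle is precisely this compatibility with $J$: a bare permutation of the coordinates will generally \emph{not} commute with the standard $J$, so the matrix that works for the bracket identity must be chosen within the smaller group $\U(2)\times\U(1)$, and one has to check that such a choice still exists. Concretely, writing $\vg_1\cong\CC^2$ via $z_1=e_1+\im e_2$, $z_2=e_3+\im e_4$, the map should be a $\CC$-linear isometry of $\CC^2$; candidates like $z_1\mapsto z_1,\ z_2\mapsto \bar z_2$ are \emph{not} allowed (antilinear), so the swap/sign pattern has to be realized $\CC$-linearly, e.g. by $z_1\mapsto z_1,\ z_2\mapsto \im z_2$ or $z_1\mapsto \bar{?}$-type corrections encoded as real $4\times4$ rotations. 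Once a $\varphi_1\in\U(2)$ producing the correct bivectors is identified, the remaining check — that the induced action on $\vg_2$ fixing or reflecting $e_5,e_6$ lands in $\U(1)$ and makes all six bracket equations hold for all $t$ — is routine, since $t$ enters only through the overall scalars $\sqrt t,1/\sqrt t$ which are untouched by $\varphi$. I would close by noting this shows $\mu^{2,t}_4$ and $\mu^{1,t}_4$ are isometric Hermitian nilmanifolds, consistent with the remark that only one curve on $\ngo_4$ survives in Table \ref{tablecorch}.
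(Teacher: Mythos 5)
Your strategy is the same as the paper's: exhibit an explicit $\varphi=\left[\begin{smallmatrix}\varphi_1&\\&\varphi_2\end{smallmatrix}\right]\in \U(2)\times\U(1)$ and verify the six bracket identities $(\varphi\cdot\mu^{1,t}_4)(e_i,e_j)=\mu^{2,t}_4(e_i,e_j)$ directly. The problem is that you never actually produce $\varphi$, and the existence of such an element is the entire content of the proposition; everything else really is routine, as you say. Worse, the candidates you float provably fail. Any $\varphi_1$ that is diagonal or anti-diagonal as a complex $2\times 2$ matrix (this includes every ``permutation-type'' map and $z_1\mapsto z_1,\ z_2\mapsto \im z_2$) sends the complex line $\la e_1,e_2\ra_{\RR}$ onto either $\la e_1,e_2\ra_{\RR}$ or $\la e_3,e_4\ra_{\RR}$, so
$(\varphi\cdot\mu^{1,t}_4)(e_1,e_2)=\varphi_2\,\mu^{1,t}_4(\varphi_1^{-1}e_1,\varphi_1^{-1}e_2)=\pm\sqrt{t}\,\varphi_2 e_5\neq 0$,
whereas $\mu^{2,t}_4(e_1,e_2)=0$. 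Hence the required $\varphi_1$ must have all four complex entries nonzero, i.e.\ it genuinely mixes the two complex coordinates, and finding it is not a matter of ``pinning down signs'' on a permutation.

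The paper resolves exactly this point by writing down the unitary matrix
$\varphi_1=\tfrac{\sqrt{2}}{2}\left[\begin{smallmatrix}\im&-1\\ 1&-\im\end{smallmatrix}\right]\in\U(2)$, $\varphi_2=1$,
passing to its real $4\times 4$ form via $a+b\im\mapsto\left[\begin{smallmatrix}a&-b\\ b&a\end{smallmatrix}\right]$, and checking all six brackets; the parameter $t$ indeed only rides along through the scalars $\sqrt{t},1/\sqrt{t}$, as you anticipated. To complete your argument you would need to either exhibit this (or an equivalent) matrix, or give an a priori existence argument --- for instance by comparing the two brackets as elements of $\Lambda^2(\CC^2)^*\otimes\CC$-type data and invoking transitivity of $\U(2)$ on the relevant configurations --- neither of which is present in what you wrote. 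As it stands, the proof has a genuine gap at its central step.
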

\begin{proof}
We have
$$
\begin{array}{lll}
g=\begin{bmatrix}
\frac{\sqrt{2}}{2}i&-\frac{\sqrt{2}}{2}&0\\
&&\\
\frac{\sqrt{2}}{2}&-\frac{\sqrt{2}}{2}i&0\\
&&\\
0&0&1
\end{bmatrix} & \in \ \U(2)\times\U(1).
\end{array}
$$
Using the identification
$a+bi\mapsto\left[\begin{smallmatrix} a&-b\\b&a \end{smallmatrix}\right]$, we thus get
\begin{equation}\label{matriz}
g=\left[\begin{smallmatrix}
0&-\frac{\sqrt{2}}{2}&-\frac{\sqrt{2}}{2}&0&0&0\\
&&&&&\\
\frac{\sqrt{2}}{2}&0&0&-\frac{\sqrt{2}}{2}&0&0\\
&&&&&\\
\frac{\sqrt{2}}{2}&0&0&\frac{\sqrt{2}}{2}&0&0\\
&&&&&\\
0&\frac{\sqrt{2}}{2}&-\frac{\sqrt{2}}{2}&0&0&0\\
&&&&&\\
0&0&0&0&1&0\\
&&&&&\\
0&0&0&0&0&1
\end{smallmatrix}\right]
\end{equation}
By definition, it follows that
\begin{itemize}{\small
\item $\mu^{2,t}_4(e_1,e_2)=0.$\\
$g\cdot\mu^{1,t}_4(e_1,e_2)=g\mu^{1,t}_4\left(-\frac{\sqrt{2}}{2}e_2-\frac{\sqrt{2}}{2}e_3,
\frac{\sqrt{2}}{2}e_1-\frac{\sqrt{2}}{2}e_4\right)=
g\{\frac{1}{2}\left(\sqrt{t}e_5-\sqrt{t}e_5\right)\}=0.$

\item $\mu^{2,t}_4(e_1,e_3)=\sqrt{t}e_5.$\\
$g\cdot\mu^{1,t}_4(e_1,e_3)=g\mu^{1,t}_4\left(-\frac{\sqrt{2}}{2}e_2-\frac{\sqrt{2}}{2}e_3,
\frac{\sqrt{2}}{2}e_1+\frac{\sqrt{2}}{2}e_4\right)=
g\{\frac{1}{2}\left(\sqrt{t}e_5+\sqrt{t}e_5\right)\}=\sqrt{t}e_5.$

\item $\mu^{2,t}_4(e_1,e_4)=-\frac{1}{\sqrt{t}}e_6.$\\
$g\cdot\mu^{1,t}_4(e_1,e_4)=g\mu^{1,t}_4\left(-\frac{\sqrt{2}}{2}e_2-\frac{\sqrt{2}}{2}e_3,
\frac{\sqrt{2}}{2}e_2-\frac{\sqrt{2}}{2}e_3\right)=
g\{\frac{1}{2}\left(-\frac{1}{\sqrt{t}}e_6-\frac{1}{\sqrt{t}}e_6\right)\}=-\frac{1}{\sqrt{t}}e_6.$

\item $\mu^{2,t}_4(e_2,e_3)=\frac{1}{\sqrt{t}}e_6.$\\
$g\cdot\mu^{1,t}_4(e_2,e_3)=g\mu^{1,t}_4\left(\frac{\sqrt{2}}{2}e_1-\frac{\sqrt{2}}{2}e_4,
\frac{\sqrt{2}}{2}e_1+\frac{\sqrt{2}}{2}e_4\right)=
g\{\frac{1}{2}\left(\frac{1}{\sqrt{t}}e_6+\frac{1}{\sqrt{t}}e_6\right)\}=\frac{1}{\sqrt{t}}e_6.$

\item $\mu^{2,t}_4(e_2,e_4)=\sqrt{t}e_5.$\\
$g\cdot\mu^{1,t}_4(e_2,e_4)=g\mu^{1,t}_4\left(\frac{\sqrt{2}}{2}e_1-\frac{\sqrt{2}}{2}e_4,
\frac{\sqrt{2}}{2}e_2-\frac{\sqrt{2}}{2}e_3\right)=
g\{\frac{1}{2}\left(\sqrt{t}e_5+\sqrt{t}e_5\right)\}=\sqrt{t}e_5.$

\item $\mu^{2,t}_4(e_3,e_4)=0.$\\
$g\cdot\mu^{1,t}_4(e_3,e_4)=g\mu^{1,t}_4\left(\frac{\sqrt{2}}{2}e_1+\frac{\sqrt{2}}{2}e_4,
\frac{\sqrt{2}}{2}e_2-\frac{\sqrt{2}}{2}e_3\right)=
g\{\frac{1}{2}\left(\sqrt{t}e_5-\sqrt{t}e_5\right)\}=0.$}
\end{itemize}
Hence \ $g\cdot\mu^{1,t}_4 = \mu^{2,t}_4$, \ which completes the proof.
\end{proof}

\begin{table}
\centering
\renewcommand{\arraystretch}{1.6}
\begin{tabular}{|c|c|c|c|}\hline
$\mathbf{\ngo}$ & \textbf{Bracket} & \textbf{Type} & \textbf{Minimal}\\ \hline\hline
$\ngo_1$ & $\mu_1(e_1,e_2)=e_6$ & {\small $(3<5<6;2,2,2)$} & Yes \\ \hline
$\ngo_2$ & $\mu_2^{\pm}(e_1,e_2)=e_6$, \ $\mu_2^{\pm}(e_3,e_4)=\pm e_6$ &
$(1<2;4,2)$ & Yes \\ \hline
$\ngo_3$ &
$\mu^{s}_3(e_1,e_2)=e_5$, \ $\mu^{s}_3(e_3,e_4)= \frac{-s}{\sqrt{1+s^2}}e_5 + \frac{1}{\sqrt{1+s^2}}e_6$ & $(1<2;4,2)$ & Yes \\ & $s\in\RR$ & & \\ \hline
$\ngo_4$ &
$\mu^{t}_4(e_1,e_2)=\sqrt{t}e_5$, \ $\mu^{t}_4(e_1,e_4)=\frac{1}{\sqrt{t}}e_6$ & $(1<2;4,2)$ & Yes \\ %\hline
 & $\mu^{t}_4(e_2,e_3)= - \frac{1}{\sqrt{t}}e_6$, \ $\mu^{t}_4(e_3,e_4)= - \sqrt{t}e_5$ &  &  \\
& $t\in (0,1]$ & & \\ \hline
$\ngo_5$ & $\mu_5(e_1,e_2)= e_5$, \ $\mu_5(e_1,e_4)= - e_6$ & ------ & No \\
& $\mu_5(e_2,e_3)= e_6$ & & \\ \hline
$\ngo_6$ & $\mu_6(e_1,e_2)= - e_3$, \ $\mu_6(e_1,e_4)= - e_6$ & {\small $(1<2<3;2,2,2)$} & Yes \\
& $\mu_6(e_2,e_3)= e_6$ & & \\ \hline
$\ngo_7$ & $\mu_7^{t}(e_1,e_2)= - \sqrt{t+1/t}e_4$, \ $\mu_7^{t}(e_1,e_3)=\sqrt{t}e_5$ & {\small $(1<2<3;2,2,2)$} &
Yes \\
& $\mu_7^{t}(e_2,e_4)= \sqrt{t}e_5$, \ $\mu_7^{t}(e_1,e_4)= - \frac{1}{\sqrt{t}}e_6$ & & \\
& $\mu_7^{t}(e_2,e_3)= \frac{1}{\sqrt{t}}e_6$, \ $t\in (0,1]$ & & \\ \cline{2-2}
& \mbox{\small$\widetilde{\mu}_7^{t}(e_1,e_2)= - \sqrt{-t-1/t}e_4$}, \ $\widetilde{\mu}_7^{t}(e_1,e_3)=
 \sqrt{-t}e_5$ & & \\
& $\widetilde{\mu}_7^{t}(e_2,e_4)= \sqrt{-t}e_5$, \ $\widetilde{\mu}_7^{t}(e_1,e_4)= \frac{1}{\sqrt{-t}}e_6$ & & \\
& $\widetilde{\mu}_7^{t}(e_2,e_3)= - \frac{1}{\sqrt{-t}}e_6$, \ $t\in [-1,0)$ & & \\ \hline
\end{tabular}
\vspace{0.3cm}
\caption{Minimal metrics on $6$-dimensional abelian complex nilmanifolds.}\label{tablecurv}
\end{table}

\begin{theorem}\label{result4}
Any $6$-dimensional abelian complex nilmanifold admits a minimal metric, with the only
exception of $(N_5, J)$.
\end{theorem}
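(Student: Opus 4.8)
The plan is to verify minimality case by case for each of the seven Lie algebras $\ngo_1,\dots,\ngo_7$ appearing in Table \ref{tablecorch}, using the criterion in Theorem \ref{RS}: a compatible metric $\ip$ is minimal if and only if $\Ricc_{\ip}=cI+D$ for some $c\in\RR$ and $D\in\Der(\ngo)$. Since the complex structure $J$ and the candidate metric $\ip$ are fixed throughout (both given by \eqref{Jstand2}), for each bracket $\mu$ in the table I would first compute the Ricci operator $\Ric_\mu$ of the Riemannian nilmanifold $(N_\mu,\ip)$ — this is a standard finite computation using the well-known formula for the Ricci curvature of a nilpotent Lie group with left-invariant metric — then project onto the $J$-commuting symmetric maps to obtain $\Ricc_\mu=\unm(\Ric_\mu-J\Ric_\mu J)$. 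Because all these brackets are $2$-step and the decomposition $\ngo=\vg_1\oplus\vg_2$ is orthogonal with $\vg_2=[\ngo,\ngo]$, the Ricci operator is block-diagonal with respect to $\vg_1\oplus\vg_2$, and $\Ricc_\mu$ inherits this form; one then checks whether $\Ricc_\mu-cI$ is a derivation for a suitable $c$.

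For each case I would exhibit the decomposition $\Ricc_\mu=cI+D$ explicitly and verify $D\in\Der(\ngo)$ directly on the structure constants, which also yields the \emph{type} $\eigen$ recorded in Table \ref{tablecurv}. For $\ngo_1=\hg_3\times\RR^3$ the derivation is diagonal with the three distinct eigenvalues producing type $(3<5<6;2,2,2)$; for $\ngo_2,\ngo_3,\ngo_4$ one gets the common type $(1<2;4,2)$, and in fact Example \ref{ejmmin} already carries out the computation for $\ngo_4$ (there called $\mu_t$), so that case is done and $\ngo_2$ (which is $\mu_t$ at $t=1$ up to the sign, i.e.\ $\mu_2^\pm$) and $\ngo_3$ are handled by entirely analogous diagonal computations after the normalization of the $e_5$-coefficient shown in Table \ref{tablecurv}. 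For $\ngo_6$ and $\ngo_7$ the derivation is again diagonal, now with eigenvalue pattern $(1<2<3;2,2,2)$; here one must be slightly careful because these algebras are $2$-step but $\vg_2$ is not simply spanned by $e_5,e_6$ (for $\ngo_6$ one has $e_3\in[\ngo,\ngo]$), so the relevant orthogonal $2$-step decomposition is $\la e_1,e_2,e_4\ra\oplus\la e_3,e_5,e_6\ra$, and the normalization $\mu_7^t(e_1,e_2)=-\sqrt{t+1/t}\,e_4$ in Table \ref{tablecurv} (versus $-e_4$ in Table \ref{tablecorch}) is exactly what is needed to make $\Ricc-cI$ a derivation — this rescaling is the content of choosing a point in the $\Gl_3(\CC)\times\Gl_1(\CC)$-orbit where the moment-map/minimality condition holds.

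The only remaining, and genuinely different, case is $\ngo_5$, for which the claim is that \emph{no} compatible minimal metric exists. The main obstacle is proving this non-existence: by Theorem \ref{RS1} it suffices to show that the functional $F(\mu)=\tr(\Ricc_\mu)^2/\|\mu\|^4$ has no critical point in the orbit $\Gl_3(\CC)\cdot\mu_5$, equivalently (by GIT/moment-map theory, since $F$ restricted to the orbit attains its minimum exactly at critical points when they exist) that $\inf_{\Gl_3(\CC)\cdot\mu_5}F$ is not attained. I would do this by an explicit degeneration: construct a one-parameter subgroup $g_s=\exp(s\,\mathrm{diag}(\dots))\in\Gl_3(\CC)$ and compute $\lim_{s\to\infty}g_s\cdot\mu_5$ (after rescaling to unit norm), showing the orbit limit is a strictly ``more abelian'' bracket $\lambda\notin\Gl_3(\CC)\cdot\mu_5$ with $F(\lambda)<F(\mu)$ for all $\mu$ in the orbit, or alternatively showing directly that for every compatible metric the equation $\Ricc=cI+D$, $D\in\Der(\ngo_5)$, is inconsistent — the derivation algebra $\Der(\ngo_5)$ is small enough to list, and one checks that the symmetric $J$-commuting part of $\Ric$ can never be forced into $cI+\Der(\ngo_5)$ no matter how the $\Gl_3(\CC)$-action rescales the structure constants. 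This last step, ruling out minimality for $\ngo_5$, is where the real work lies; the other six cases are verification of the displayed normal forms.
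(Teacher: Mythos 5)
Your treatment of the six positive cases follows the paper exactly: fix $J$ and $\ip$ as in (\ref{Jstand2}), compute $\Ricc_\mu$ for each normalized bracket, and verify $\Ricc_\mu=cI+D$ with $D\in\Der(\ngo)$, reading off the types $(3<5<6;2,2,2)$, $(1<2;4,2)$ and $(1<2<3;2,2,2)$; the observation that $\ngo_3$ and $\ngo_7$ require a preliminary rescaling by some $g\in\Gl_3(\CC)$ is also exactly what the paper does. (One factual slip: $\ngo_6$ and $\ngo_7$ are \emph{not} $2$-step --- e.g.\ $\mu_6(e_2,\mu_6(e_1,e_2))=-e_6\neq 0$ --- and the paper explicitly treats them as the non-$2$-step cases; this does not damage the verification, since checking $\Ricc=cI+D$ does not need a $2$-step decomposition, but your justification via block-diagonality of $\Ric$ for ``$2$-step'' algebras does not apply to them as stated.)

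The genuine gap is in the only hard case, $\ngo_5$. You correctly reduce to showing that $F$ has no critical point on $\Gl_3(\CC)\cdot\mu_5$, but your proposed mechanism --- exhibit a degeneration $\lambda=\lim g_s\cdot\mu_5$ with $F(\lambda)<F(\mu)$ for \emph{all} $\mu$ in the orbit --- is asserted, not proved, and it is not clear how you would establish that strict inequality at every point of a $\Gl_3(\CC)$-orbit; likewise ``the equation $\Ricc=cI+D$ is inconsistent for every compatible metric'' would require checking the condition over the entire orbit, which you do not carry out. The paper closes this gap with a specific piece of GIT machinery: for the stratum label $\beta=\mathrm{diag}(-\tfrac12,\dots,\tfrac12,\tfrac12)$ one has that the orbit contains a minimal point if and only if the $H_\beta$-orbit is closed (Theorem 9.1 of \cite{einstein}), and here $\widetilde H_\beta=\Sl_2(\CC)\times\{I\}$. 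Non-closedness is then shown by the explicit degeneration $\mathrm{diag}(a,a,1/a,1/a)\cdot\mu_5\to\mu_2$ as $a\to\infty$: since $\mu_2$ is minimal, $\Sl_2(\CC)\cdot\mu_2$ is closed, and by uniqueness of the closed orbit in an orbit closure this forces $\Sl_2(\CC)\cdot\mu_5$ to be non-closed (and $0\notin\overline{\Sl_2(\CC)\cdot\mu_5}$). Your degeneration idea is the right instinct, but without the closed-orbit criterion (or an equivalent complete argument) the non-existence of a minimal metric on $(N_5,J)$ is not established, and that is precisely the content of the theorem beyond routine verification.
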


\begin{proof}
By applying Theorem \ref{RS} (as we described in Example \ref{ejmmin} for $\ngo_4$),
it is easily seen that $(N_1, J)$ admit a minimal metric of type $(3<5<6;2,2,2)$; $(N_2, J)$,
$(N_3, J)$ and $(N_4, J)$ one of type $(1<2;4,2)$; $(N_6, J)$ and $(N_7, J)$ one of type
$(1<2<3;2,2,2)$. Furthermore, we can see that each $\mu_i$ on $\ngo_i$ is minimal, if $i\neq 5$
(column 4, Table \ref{tablecurv}). Note that the Table \ref{tablecurv} differs from the
Table \ref{tablecorch} in $\ngo_3$ and $\ngo_7$, this is due to get $\mu_3$ and $\mu_7$ minimals
was required to act with a matrix $g\in\Gl_3(\CC)$ in the brackets given in the Table \ref{tablecorch}.
For example, for $\ngo_7$, take
\begin{align*}
g=\left[\begin{smallmatrix} \alpha&&\\ &\frac{1}{\alpha}&\\ &&1
\end{smallmatrix}\right],
\end{align*}
where $\alpha=(t+\frac{1}{t})^{-\frac{1}{6}}$ for $\mu_7^{t}$, and
$\alpha=(-t-\frac{1}{t})^{-\frac{1}{6}}$ for $\widetilde{\mu}_7^{t}$.

It remains to prove that $(N_5, J)$ does not admit minimal
compatible metrics. To do this, we will use some properties of the $\Gl_n(\RR)$-invariant
stratification for the representation \linebreak $\Lambda^2(\RR^n)^*\otimes\RR^n$ of $\Gl_n(\RR)$
(see \cite{standard}, \cite{einstein} for more details).

Let \ {\small$\beta = diag(-1/2,-1/2,-1/2,-1/2,1/2,1/2)$}. Hence
$$G_{\beta}:= \left\{g\in \Gl(6) : g\beta g^{-1} = \beta, \ g J g^{-1} = J\right\}=
\Gl_2(\CC)\times \Gl_1(\CC).$$
Since $\ggo_{\beta}=\RR\beta \oplus^{\bot} \hg_{\beta}$, it follows that $\hg_{\beta}$
is Lie subalgebra. Let $H_{\beta}\subset G_{\beta}$ denote the Lie subgroup with Lie
algebra $\hg_{\beta}$.
We thus get
$$\hg_{\beta}= \left\{\left[
                                \begin{array}{cc}
                                  A & 0 \\
                                  0 & B \\
                                \end{array}
                              \right]:
                                trA = trB
\right\}, \quad H_{\beta}= \left\{\left[
                                \begin{array}{cc}
                                  g & 0 \\
                                  0 & h \\
                                \end{array}
                              \right]:
                                det(g) = det(h)
\right\}.$$
But $\mathfrak{h}_{\beta}= \left(\RR\left[\begin{smallmatrix} I&\\ & 2I
\end{smallmatrix}\right]\right)\oplus \widetilde{\mathfrak{h}}_{\beta}$ where
$$\widetilde{\mathfrak{h}}_{\beta}= \left\{\left[
                                \begin{array}{cc}
                                  A & 0 \\
                                  0 & B\\
                                \end{array}
                              \right]:
                                trA = trB =0
\right\}.$$
This clearly forces $\widetilde{H}_{\beta}= \Sl_2(\CC) \times \{I\}$. Therefore, it suffices to
prove that \ $0\notin \overline{\Sl_2(\CC)\cdot \mu_5}$ \ and \
$\mu_2\in \overline{\Sl_2(\CC)\cdot \mu_5}$, with $\mu_2$ and $\mu_5$ the brackets of  $\mathfrak{n}_2$
and $\mathfrak{n}_5$ respectively, which is due to the fact that $G\cdot \mu$ is minimal if and only
if $H_{\beta}\cdot \mu$ is closed (see for instance \cite[Theorem 9.1.]{einstein}). Indeed,
an easy computation shows that
$$\left[\begin{smallmatrix} a&&&\\ &a&&\\&&1/a&\\&&&1/a
\end{smallmatrix}\right]\cdot \mu_5 \longrightarrow \mu_2 \quad {\rm letting} \ a\to\infty.$$
From what has already been and the fact that $\Sl_2(\CC)\cdot \mu_2$ is closed ($\mathfrak{n}_2$ is
minimal), we conclude that  $0\notin \overline{\Sl_2(\CC)\cdot \mu_5}$ \ by the uniqueness of closed
orbits in the closure of an orbit (note that  $\{0\}$ is a closed orbit).
\end{proof}

We now will use the Pfaffian forms to give an alternative proof of the pairwise non-isomorphism of
the family given in \cite[Theorem 3.5.]{AndBrbDtt} in the $2$-step nilpotent case. Since
$\dim{\vg_1} = 4$ and $\dim{\vg_2} = 2$ , the Pfaffian forms of
$\ngo_1,\ldots,\ngo_5$ belong to the set $P_{2,2}(\RR)$; so we are left with the task of
determining the quotient $P_{2,2}(\RR)/\Gl_1(\CC) = P_{2,2}(\RR)/\RR_{>0}\U(1)$ (see Proposition \ref{PfafCC}).

Using the identification
$P = ax^2+bxy+cy^2 \leftrightarrow P_{\scriptscriptstyle A}:= \la A (x,y), (x,y) \ra$,
where $A = \left[\begin{smallmatrix}a & b/2\\ b/2 & c\end{smallmatrix}\right]$, we have
(see Remark \ref{prequiv})
\begin{equation*}
\begin{aligned}
P_{2,2}(\RR) / \pm \Gl_2(\RR) \ = \
\end{aligned}
\left\{
\begin{aligned}
&x^2+y^2,\\
&x^2-y^2,\\
&x^2,\\
& 0.
\end{aligned}
\right.
\end{equation*}
Proposition \ref{PfafCC} now implies that
$$P_{2,2}(\RR)/\RR_{>0}\U(1) = \{ax^2+by^2 : a\leq b, a^2+b^2=1\} \cup \{0\}.$$
This allows us to classify the Pfaffian forms of $\ngo_1,\ldots,\ngo_5$,
which is summarized in Figure \ref{grpfaffian}. The Lie algebra $\ngo_4^*$ is given by
$\mu_t(e_1,e_3)=-ts e_6$, $\mu_t(e_1,e_4)= \mu_t(e_2,e_3)= s e_5$,
$\mu_t(e_2,e_4)= s(2-t) e_6$, with $s=\sqrt{2+t^2+(2-t)^2}, \ 1\leq t < 2$;
it is minimal and $(N_{\mu_t}, J)$ is not abelian (see \cite[Example 5.3.]{canonical}).

From Figure 1, it is clear that $\ngo_3$ and $\ngo_4$ have (minimal) Hermitian metric curves;
$(\ngo_2, \mu_2^{+})$ and $(\ngo_2, \mu_2^{-})$ are distinguished; $\ngo_1$ has an unique
(minimal) Hermitian metric; and $\ngo_5$ has an unique Hermitian metric.

We now consider the Lie algebras which are not
$2$-step nilpotent. The Lie algebra $\ngo_6$ has an unique minimal metric up to isometry and scaling,
by Theorem \ref{RS}. For $\ngo_7$, an easy computation shows that for all $t\in [-1,0), s\in(0,1]$
$$\Ric_{\widetilde{\mu}_7^{t}}\mid_{\zg}=
\left[
  \begin{array}{cc}
    -t & 0 \\
    0 & -1/t \\
  \end{array}
\right],\qquad
\Ric_{\mu_7^{s}}\mid_{\zg}=
\left[
  \begin{array}{cc}
    s & 0 \\
    0 & 1/s \\
  \end{array}
\right],
$$
where $\zg:=\la e_5, e_6\ra_{\RR}$. From this we deduce that the Hermitian nilmanifolds
$\{(N_{\mu_7^t}, J, \ip) : t\in (0,1]\}$ are pairwise non-isometric
(as we described in Example \ref{ejmmin} for $\ngo_4$). Likewise for
$\{(N_{\mbox{\small$\widetilde{\mu}_7^{t}$}}, J, \ip) : t\in [-1,0)\}$.

We will distinguish $\mu_7^t$, $t\in (0,1]$, of \ $\mbox{\small$\widetilde{\mu}_7^{t}$}$, $t\in [-1,0)$. To do this we need the following (see (\ref{prodtint}))
\begin{align*}
& \|\mu_7^t\|^2=2\left(\|\mu_7^{t}(e_1, e_2)\|^2+
\|\mu_7^{t}(e_1, e_3)\|^2+\|\mu_7^{t}(e_1, e_4)\|^2+
\|\mu_7^{t}(e_2, e_3)\|^2+
\|\mu_7^{t}(e_2, e_4)\|^2\right)\\
&\hspace{1.1cm}= 6\left(t+\frac{1}{t}\right), \ \ t\in (0,1].\\
& \|\mbox{\small$\widetilde{\mu}_7^{t}$}\|^2=
2\left(\|\mbox{\small$\widetilde{\mu}_7^{t}$}(e_1, e_2)\|^2+
\|\mbox{\small$\widetilde{\mu}_7^{t}$}(e_1, e_3)\|^2
+\|\mbox{\small$\widetilde{\mu}_7^{t}$}(e_1, e_4)\|^2+
\|\mbox{\small$\widetilde{\mu}_7^{t}$}(e_2, e_3)\|^2+
\|\mbox{\small$\widetilde{\mu}_7^{t}$}(e_2, e_4)\|^2\right)\\
&\hspace{1.1cm}= - 6\left(t+\frac{1}{t}\right), \ \ t\in [-1,0).
\end{align*}

\begin{figure}
\begin{center}
\psset{unit=2cm}
\begin{pspicture}(-2,-1.6)(2,2)
%\grilla
\pscircle(0,0){1}
\psaxes[labels=none,ticks=none]{->}(0,0)(-2,-1.7)(2,2)%
\psline[linewidth=0.5pt](-1,-1)(1,1)
\psarc[linewidth=1.2pt]{[-)}(0,0){1}{45}{90}
\psarc[linewidth=1.2pt]{(-)}(0,0){1}{90}{180}
\psarc[linewidth=1.2pt]{(-]}(0,0){1}{180}{225}
\psdot(0,0)
\rput{-20}(0.45,1.1){$\ngo_4^*$}
\rput{35}(-0.8,0.8){$\ngo_3$}
\rput(-1.1,-0.5){$\ngo_4$}
\rput(0.1,-0.1){$\ngo_1$}
\rput(0.25,1.9){b(t)}
\rput(1.9,-0.2){a(t)}
%\rput(0.3,1.4){$\ngo_2^+$}
\rput(-0.3,1.4){$\ngo_2^+$}
%\psline[linewidth=0.5pt]{->}(0.23,1.33)(0.02,1.1)
\psline[linewidth=0.5pt]{->}(-0.23,1.33)(-0.02,1.1)
%\rput(-1.5,-0.3){$\ngo_5$}
\rput(-1.6,0.4){$\ngo_5,\ngo_2^{-}$}
%\psline[linewidth=0.5pt]{->}(-1.43,-0.23)(-1.1,-0.02)
\psline[linewidth=0.5pt]{->}(-1.3,0.3)(-1.1,0.02)
\end{pspicture}\captionof{figure}{\ Pfaffian forms of $\ngo_1$,\ldots,$\ngo_5$.}\label{grpfaffian}
\end{center}
\end{figure}

\begin{proposition}
\ $\mbox{\small$\widetilde{\mu}_7^{t}$}\notin \RR^{*}\ \U(1)\times\U(1)\times\U(1)\cdot \mu_7^{s}$ \
for all \ $t\in [-1,0), s\in(0,1]$.
\end{proposition}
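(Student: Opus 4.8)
The plan is to argue by contradiction. Suppose $\widetilde{\mu}_7^{t}=g\cdot\mu_7^{s}$ for some $g\in\RR^{*}\,\U(1)\times\U(1)\times\U(1)$, and write $g=c\varphi$ with $c\in\RR^{*}$ and $\varphi=\left[\begin{smallmatrix}\varphi_1&&\\ &\varphi_2&\\ &&\varphi_3\end{smallmatrix}\right]$, each $\varphi_i\in\U(1)=\SO(2)$ acting on the coordinate plane $\la e_{2i-1},e_{2i}\ra$; in particular $g$ preserves the center $\zg=\la e_5,e_6\ra_{\RR}$, on which it restricts to $c\varphi_3$. The first step is to pin down $c$ and $s$: since the Ricci operator transforms by conjugation under the orthogonal factor $\varphi$ and by a positive scalar under the $\RR^{*}$-factor, the eigenvalue multisets of $\Ric_{\widetilde{\mu}_7^{t}}|_{\zg}=\operatorname{diag}(-t,-1/t)$ and $\Ric_{\mu_7^{s}}|_{\zg}=\operatorname{diag}(s,1/s)$ (both computed before the statement) differ by that positive scalar; comparing the products of the two eigenvalues, which equal $1$ in both cases, forces the scalar to be $1$, hence $c=\pm1$, and then matching the eigenvalues themselves, using $s\in(0,1]$ and $t\in[-1,0)$, yields $s=-t$.

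The second step is the key reduction. Inspecting the bracket formulas in Table \ref{tablecurv}, one checks that $\widetilde{\mu}_7^{t}$ and $\mu_7^{-t}$ agree on every pair of basis vectors except that the $e_6$-components of $(e_1,e_4)$ and $(e_2,e_3)$ are negated; equivalently, $\widetilde{\mu}_7^{t}=\sigma\cdot\mu_7^{-t}$ with $\sigma:=\operatorname{diag}(1,1,1,1,1,-1)$. Since also $\widetilde{\mu}_7^{t}=g\cdot\mu_7^{-t}$ (we are now in the case $s=-t$), the map $\alpha:=\sigma^{-1}g$ lies in $\Aut(\ngo,\mu_7^{-t})$. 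It is block diagonal with respect to $\la e_1,e_2\ra\oplus\la e_3,e_4\ra\oplus\la e_5,e_6\ra$ and, because $c=\pm1$, it restricts to an element of $\SO(2)$ (a rotation) on each of the first two planes, whereas on $\zg$ it equals $\operatorname{diag}(1,-1)\cdot(\pm\varphi_3)$, an orthogonal map of determinant $-1$, i.e.\ a reflection. So everything comes down to showing that $\Aut(\ngo,\mu_7^{-t})$ contains no element that is a rotation on $\la e_1,e_2\ra$, a rotation on $\la e_3,e_4\ra$, and a reflection on $\zg$ — in other words, the orientation reversal of $\zg$ built into $\sigma$ cannot be absorbed by $\U(1)\times\U(1)\times\U(1)$.

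The last step carries this out with the defining identities $\alpha\,\mu_7^{-t}(X,Y)=\mu_7^{-t}(\alpha X,\alpha Y)$. Applied to $\mu_7^{-t}(e_1,e_2)=-\sqrt{-t-1/t}\,e_4$, and using $\det(\alpha|_{\la e_1,e_2\ra})=1$ so that $\mu_7^{-t}(\alpha e_1,\alpha e_2)=\mu_7^{-t}(e_1,e_2)$, one gets $\alpha e_4=e_4$, whence $\alpha|_{\la e_3,e_4\ra}=I$ and $\alpha e_3=e_3$. Writing $\alpha e_1=c_1 e_1+s_1 e_2$ with $c_1^{2}+s_1^{2}=1$ and $\alpha|_{\zg}=\left[\begin{smallmatrix}a&b\\ b&-a\end{smallmatrix}\right]$ with $a^{2}+b^{2}=1$, the identity applied to $\mu_7^{-t}(e_1,e_3)=\sqrt{-t}\,e_5$ gives $a=c_1$ (and $b=-s_1/t$), while applied to $\mu_7^{-t}(e_1,e_4)=-\frac{1}{\sqrt{-t}}\,e_6$ it gives $a=-c_1$ (and $b=s_1 t$). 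Hence $c_1=0$, and $-s_1/t=s_1 t$ forces $s_1(1+t^{2})=0$, i.e.\ $s_1=0$, contradicting $c_1^{2}+s_1^{2}=1$.

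I expect the main difficulty to be conceptual rather than computational: the naive invariants all fail to separate the two families — the norms satisfy $\|\mu_7^{s}\|^{2}=\|\widetilde{\mu}_7^{-s}\|^{2}$, the Ricci data on $\zg$ coincide once $s=-t$, and even the Pfaffian-type binary quadratic $Z\mapsto\Pf(J_Z|_{\zg^{\perp}})$ agrees for $\widetilde{\mu}_7^{t}$ and $\mu_7^{-t}$ — so one is forced down to the rigidity of the automorphism group, where the obstruction is a discrete sign (an orientation on the two-dimensional center) that the connected abelian group $\U(1)\times\U(1)\times\U(1)$ cannot flip, in the same spirit as the distinction between $\mu_2^{+}$ and $\mu_2^{-}$ detected by the sign of the Pfaffian form.
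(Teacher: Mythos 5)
Your proof is correct, and at its core it is the same argument as the paper's: both use the restriction of the Ricci operator to the center $\zg=\la e_5,e_6\ra_{\RR}$ to force $s=-t$ and $c=\pm 1$ (the paper normalizes $c$ via the norms $\|\mu_7^{t}\|^2=\|\widetilde{\mu}_7^{-t}\|^2$ rather than via $\det\Ric|_{\zg}$, an immaterial difference), and both then extract the same contradictory linear system from the brackets of $(e_1,e_2)$, $(e_1,e_3)$ and $(e_1,e_4)$. What you do differently is the packaging of the last step: the paper substitutes a general block-rotation $G\in\U(1)\times\U(1)\times\U(1)$ directly into $\widetilde{\mu}_7^{-t}=G\cdot\mu_7^{t}$ and reads off the equations, whereas you first observe that $\widetilde{\mu}_7^{t}=\sigma\cdot\mu_7^{-t}$ with $\sigma=\operatorname{diag}(1,1,1,1,1,-1)$ and reduce to showing that $\Aut(\ngo,\mu_7^{-t})$ contains no element acting by rotations on $\la e_1,e_2\ra_{\RR}$ and $\la e_3,e_4\ra_{\RR}$ and by a reflection on $\zg$. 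The resulting equations ($a=c_1$, $a=-c_1$, $b=-s_1/t$, $b=s_1t$) are the paper's equations ($a=k$, $a=-k$, $b-ht=0$, $h+bt=0$) up to renaming, so nothing is gained computationally; but your formulation isolates the actual obstruction --- an orientation of the two-dimensional center that the connected group $\RR^{*}\,\U(1)\times\U(1)\times\U(1)$ cannot reverse --- and thereby explains conceptually why the two families are distinct, much as the sign of the Pfaffian separates $\mu_2^{+}$ from $\mu_2^{-}$. Both versions are complete and correct.
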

\begin{proof}
If we suppose that there exists $c\in\RR^{*}$ and $\varphi\in \U(1)\times\U(1)\times\U(1)$ such that
$c\widetilde{\mu}_7^{t} = \varphi\cdot\mu_7^{s}$, then
$\varphi = \left[\begin{smallmatrix}
\varphi_1&&\\&\varphi_2&\\&&\varphi_3
\end{smallmatrix}\right]$
and $c^2\Ric_{\widetilde{\mu}_7^{t}}|_{\zg} = \varphi_3\Ric_{\mu_7^{s}}|_{\zg}\varphi_{3}^{-1}$. Hence
$c^2\left[\begin{smallmatrix}-t&\\&-1/t\end{smallmatrix}\right] =
\left[\begin{smallmatrix}s&\\&1/s\end{smallmatrix}\right]$; taking quotients of their eigenvalues
we deduce that $s^2=t^2$ or $s^2=1/t^2$, which gives \ $t=-s$ if $t\in [-1,0), s\in(0,1]$. \
From this it is enough to prove that for all \ $t\in (0,1], c\in\RR^{*}$,
\begin{align}\label{proof}
\widetilde{\mu}_7^{-t}\notin c\ \U(1)\times\U(1)\times\U(1)\cdot \mu_7^{t}.
\end{align}
Moreover, if $\mbox{\small$\widetilde{\mu}_7^{-t}$}\in c\ \U(1)\times\U(1)\times\U(1)\cdot \mu_7^{t}$,
then $\|\mbox{\small$\widetilde{\mu}_7^{-t}$}\|^2= c^2\|\mu_7^t\|^2$, which yields $c^2=1$, and hence $c=\pm 1$.
Thus it is sufficient to take $c=1$ (if $c=-1$ the equations does not change).

Suppose, contrary to our claim, that $\widetilde{\mu}_7^{-t} = G\cdot \mu_7^{t}$
where
\begin{align*}
\begin{array}{lll}
G=\left[\begin{smallmatrix}
a&-b&&&&\\
b&a&&&&\\
&&c&-d&&\\
&&d&c&&\\
&&&&k&-h\\
&&&&h&k
\end{smallmatrix}\right], &&
G^{-1}=\left[\begin{smallmatrix}
a&b&&&&\\
-b&a&&&&\\
&&c&d&&\\
&&-d&c&&\\
&&&&k&h\\
&&&&-h&k
\end{smallmatrix}\right],
\end{array}
\end{align*}
with \ $a^2+b^2=c^2+d^2=k^2+h^2=1$. We thus get
\begin{align*}
& \bullet \ \widetilde{\mu}_7^{-t}(e_1,e_2)= - \sqrt{t+1/t}e_4 = G\cdot \mu_7^{t}(e_1,e_2) = d\sqrt{t+1/t} e_3 - c\sqrt{t+1/t} e_4.\\
& \bullet \ \widetilde{\mu}_7^{-t}(e_1,e_3) = \sqrt{t}e_5 = G\cdot \mu_7^{t}(e_1,e_3)\\
&\hspace{2.1cm} = \left\{(ac+bd)k\sqrt{t} + (bc-ad)\tfrac{h}{\sqrt{t}}\right\} e_5 + \left\{(ac+bd)h\sqrt{t} + (ad-bc)\tfrac{k}{\sqrt{t}}\right\} e_6.\\
& \bullet \ \widetilde{\mu}_7^{-t}(e_1,e_4) = \frac{1}{\sqrt{t}}e_6 = G\cdot \mu_7^{t}(e_1,e_4)\\
&\hspace{2.1cm} = \left\{(ad-bc)k\sqrt{t} + (ac+bd)\tfrac{h}{\sqrt{t}}\right\} e_5 + \left\{(ad-bc)h\sqrt{t} - (ac+bd)\tfrac{k}{\sqrt{t}}\right\} e_6.
\end{align*}
This is equivalent at next system (the other tree brackets produce the same equations):
\begin{equation*}
\left\{
\begin{aligned}
&c=1, \ d=0,\\
&a=k,\\
&b-ht=0,\\
&a=-k,\\
&h+bt=0,
\end{aligned}
\right.
\end{equation*}
It follows easily that $a=b=0$, contrary to $a^2+b^2=1$. Since $G$ was arbitrary, (\ref{proof})
is proved.
\end{proof}

%================================================================================================

\section{Results obtained in dimension eight}\label{result8}

In this section, our aim is to exhibit many families depending on one (see Example \ref{ejm1par} and
Example \ref{ejm1par2}), two (see Example \ref{ejm2par} and Example \ref{ejm2par2}) and three
(see Example \ref{ejm3par}) parameters of abelian complex structures on $8$-dimensional $2$-step
nilpotent Lie algebras, by using that they all admit minimal metrics for the types $(1<2;4,4)$ and $(1<2;6,2)$.

Following the idea developed in dimension six, we will determine the quotients \linebreak
$P_{4,2}(\RR)/\RR_{>0}\U(2)$ and $P_{2,3}(\RR) /\RR_{>0}\U(1)$ in the cases $(4,4)$ and $(6,2)$
respectively. This may be viewed as a first step towards the classification of abelian complex structures on
$8$-dimensional nilmanifolds. From now on, we keep the notation used in \cite{praga}.

\subsection{Type (4,4)}\label{tip44}
In this case $\vg_1=\RR^4$ and $\vg_2=\RR^4$, \ and we consider the vector space \linebreak
$W :=\Lambda^2\vg_1^*\otimes\vg_2$. If $\{ X_1,\ldots,X_4, Z_1,\ldots,Z_4\}$ is a basis of
$\ngo$ such that $\vg_1=\la X_1,...,X_4\ra_{\RR}$ and $\vg_2=\la Z_1,\ldots,Z_4\ra_{\RR}$,
then each element in $W$ will be described as
\begin{align*}
& \mu(X_1,X_2)= a_1 Z_1 + a_2 Z_2 + a_3 Z_3 + a_4 Z_4, & &\mu(X_1,X_3)= b_1 Z_1 + b_2 Z_2 + b_3 Z_3 + b_4 Z_4,\\
& \mu(X_1,X_4)= c_1 Z_1 + c_2 Z_2 + c_3 Z_3 + c_4 Z_4, & &\mu(X_2,X_3)= d_1 Z_1 + d_2 Z_2 + d_3 Z_3 + d_4 Z_4,\\
& \mu(X_2,X_4)= e_1 Z_1 + e_2 Z_2 + e_3 Z_3 + e_4 Z_4, & &\mu(X_3,X_4)= f_1 Z_1 + f_2 Z_2 + f_3 Z_3 + f_4 Z_4.
\end{align*}
The complex structure and the compatible metric will be always defined by
$$
\begin{array}{lll}
J=\left[\begin{smallmatrix}
0&-1&&&&&&\\
1&0&&&&&&\\
&&0&-1&&&&\\
&&1&0&&&&\\
&&&&0&-1&&\\
&&&&1&0&&\\
&&&&&&0&-1\\
&&&&&&1&0
\end{smallmatrix}\right], && \la X_i,X_j\ra= \la Z_i,Z_j\ra=\delta_{ij}.
\end{array}
$$
If $A=(a_1,\ldots,a_4), \ \ldots \ , F=(f_1,\ldots,f_4)$,
then $J$ is integrable on $N_\mu$ (i.e. $J$ satisfies (\ref{integral})), $\mu\in W$, if and only if
\begin{align}\label{muint}
E=B+JC+JD,
\end{align}
and $J$ is abelian if and only if
\begin{align}\label{muabel}
& E=B, \qquad D=-C.
\end{align}

Define $v_i=(a_i,b_i,c_i,d_i,e_i,f_i)$, \ $i=1,2,3,4$. It is easy to check that for any $\mu\in W$,\linebreak
$\Ric_{\mu}\mid_{\vg_2}=\frac{1}{2}[\la v_i,v_j\ra]$, $1\leq i,j \leq 4$, and \vspace{0.3cm}
\begin{align*}
\Ric_{\mu}\mid_{\vg_1}=-\frac{1}{2}
\left[\begin{smallmatrix}
\|A\|^2+\|B\|^2+\|C\|^2&\la B,D\ra+\la C,E\ra&
-\la A,D\ra+\la C,F\ra&-\la A,E\ra-\la B,F\ra\\
\la B,D\ra+\la C,E\ra&\|A\|^2+\|D\|^2+\|E\|^2&
\la A,B\ra+\la E,F\ra&\la A,C\ra-\la D,F\ra\\
-\la A,D\ra+\la C,F\ra&\la A,B\ra+\la E,F\ra&
\|B\|^2+\|D\|^2+\|F\|^2&\la B,C\ra+\la D,E\ra\\
-\la A,E\ra-\la B,F\ra&\la A,C\ra-\la D,F\ra&\la B,C\ra+\la D,E\ra&
\|C\|^2+\|E\|^2+\|F\|^2
\end{smallmatrix}\right].
\end{align*}
Therefore
{\small
\begin{align*}
\Ricc_{\mu}\mid_{\vg_1}=\frac{1}{4}
\begin{bmatrix}
-\alpha&0&\la A+F,D-C\ra&\la A+F,B+E\ra\\
0&-\alpha&-\la A+F,B+E\ra&\la A+F,D-C\ra\\
\la A+F,D-C\ra&-\la A+F,B+E\ra&-\beta&0\\
-\la A+F,B+E\ra&\la A+F,D-C\ra&0&-\beta
\end{bmatrix},
\end{align*}}

{\small
\begin{align*}
\Ricc_{\mu}\mid_{\vg_2}=\frac{1}{4}
\begin{bmatrix}
\|v_1\|^2+\|v_2\|^2&0&\la v_1,v_3\ra+\la v_2,v_4\ra&
\la v_1,v_4\ra-\la v_2,v_3\ra\\
0&\|v_1\|^2+\|v_2\|^2&\la v_2,v_3\ra-\la v_2,v_4\ra&
\la v_2,v_4\ra+\la v_1,v_3\ra\\
\la v_1,v_3\ra+\la v_2,v_4\ra&\la v_2,v_3\ra-\la v_2,v_4\ra&\|v_3\|^2+\|v_4\|^2&0\\
\la v_1,v_4\ra-\la v_2,v_3\ra&\la v_2,v_4\ra+\la v_1,v_3\ra&0&
\|v_3\|^2+\|v_4\|^2
\end{bmatrix},
\end{align*}}
where \ {\small$\alpha:=2\|A\|^2+\|B\|^2+\|C\|^2+\|D\|^2+\|E\|^2$} and
{\small$\beta:=\|B\|^2+\|C\|^2+\|D\|^2+\|E\|^2+2\|F\|^2$}.

One type of minimality which is easy to characterize is $(1<2;4,4)$. Indeed,
if for any $\mu\in W $ we have that $\Ricc_{\mu}\mid_{\vg_1}=pI_{\scriptscriptstyle 4}$ and
$\Ricc_{\mu}\mid_{\vg_2}=qI_{\scriptscriptstyle 4}$, then
\begin{align*}
\Ricc_{\mu} =
\left[\begin{smallmatrix}
pI_{\scriptscriptstyle 4}&\\
&qI_{\scriptscriptstyle 4}
\end{smallmatrix}\right] =
(2p-q)I_{\scriptscriptstyle 8} + (q-p)
\left[\begin{smallmatrix}
I_{\scriptscriptstyle 4}&\\
&2I_{\scriptscriptstyle 4}
\end{smallmatrix}\right] \in \RR I + \Der(\mu).
\end{align*}
The following are sufficient conditions for any $\mu\in W$ is minimal of type
$(1<2;4,4)$.
\begin{enumerate}
\item[(i)] Conditions for $\Ricc_{\mu}\mid_{\ngo_1}\in \RR I$:
\begin{enumerate}[$\bullet$]
\item $\la A+F,D-C\ra =0$.
\item $\la A+F,B+E\ra = 0$.
\item $\|A\|^2 = \|F\|^2$.
\end{enumerate}
\item[(ii)] Conditions for $\Ricc_{\mu}\mid_{\ngo_2}\in \RR I$:
\begin{enumerate}[$\bullet$]
\item $\|v_1\|^2 + \|v_2\|^2 = \|v_3\|^2 + \|v_4\|^2$.
\item $\la v_1,v_3\ra = -\la v_2,v_4\ra$.
\item $\la v_1,v_4\ra = \la v_2,v_3\ra$.
\end{enumerate}
\end{enumerate}
Moreover, if $\mu$ satisfies the conditions given in $(\mathrm{i})$ and $(\mathrm{ii})$,
we obtain \ $p=-\frac{1}{4}\alpha$ \ and \linebreak $q=\frac{1}{4}\left(\|v_1\|^2 +\|v_2\|^2\right)$.

In the rest of this section we will study the Pfaffian forms of $\mu\in W$.
Since $\dim{\vg_1} = \dim{\vg_2} = 4$, it follows that the Pfaffian form of any
$\mu\in W$ belongs to the set $P_{4,2}(\RR)$; so the goal is to determine the quotient
$P_{4,2}(\RR)/\RR_{>0}\U(2)$. As in the case $(4, 2)$ there is the identification
$f(\mu)\in P_{4,2}(\RR) \leftrightarrow A_{f}$, where
$A_{f}$ is a symmetric matrix, and, in consequence,
\begin{align}\label{cocient1}
P_{4,2}(\RR) / \pm \Gl_4(\RR) = \left\{
\left[\begin{smallmatrix}
1&&&\\
&1&&\\
&&-1&\\
&&&0\\
\end{smallmatrix}\right],\ldots\right\}.
\end{align}
Based on the classification of complex metabelian (two-step nilpotent) Lie algebras in \linebreak
dimension up to 9 given by L. Yu. Galitski and D. A. Timashev in \cite{metabel}, and by using the
identifications of the real forms of Lie algebra on $\CC$, we have
\begin{equation}\label{cocient}
\begin{aligned}
P_{4,2}(\CC) /\Gl_2(\CC) \ = \
\end{aligned}
\left\{
\begin{aligned}
&x^2-y^2-z^2+w^2\\
&x^2-y^2-z^2\\
&x^2-y^2\\
&x^2\\
&0
\end{aligned}
\right. \ \simeq \
\left\{
\begin{aligned}
&+ \ + \ + \ +\\
&+ \ + \ + \ 0\\
&+ \ + \ \ 0 \ \ 0\\
&+ \ \ 0 \ \ 0 \ \ 0\\
& \ 0 \ \ 0 \ \ 0 \ \ 0
\end{aligned}
\right.
\end{equation}

\begin{remark}
The polynomial $f = x^2+y^2+z^2+w^2$ is not the Pfaffian form of any $\mu\in W$.
In general, $f>0$ ($\Leftrightarrow$ $J_Z$ are invertible $\forall Z$)
is not the Pfaffian form of any $\mu\in W$. The dimensions allowed for this are: $(2k,1)$,
$(4k,2)$, $(4k,3)$, $(8k,4)$,\ \ldots \ , $(8k,7)$, $(16k,8)$, $(32k,9)$.
\end{remark}

The following expression was obtained by direct calculation rather than the equations
(\ref{cocient1}) and (\ref{cocient}).

{\small\begin{equation*}
\begin{aligned}
P_{4,2}(\RR) / \U(2) \ \simeq \ \sym(4) / \U(2) \ &= \
\left\{
\begin{aligned}
&\left(a I,
\left[\begin{smallmatrix}
b&&&\\
&-b&&\\
&&c&\\
&&&-c\\
\end{smallmatrix}\right]
\right); & & a, b, c\in \RR.\\
&\\
&\left(\left[\begin{smallmatrix}
a&&&\\
&a&&\\
&&b&\\
&&&b\\
\end{smallmatrix}\right],
\left[\begin{smallmatrix}
c&h&&\\
h&-c&&\\
&&d&l\\
&&l&-d\\
\end{smallmatrix}\right]
\right); & & a, b, c, d, h, l \in \RR \ (a<b).
\end{aligned}
\right. \\
&\\
&= \
{\small
\left\{
\begin{aligned}
&ax^2+by^2+cz^2+dw^2; & & a+b=c+d,\\
& & & a, b, c, d\in\RR.\\
&&&\\
&ax^2+by^2+cz^2+dw^2+hxy+lzw; \ \ & & a+b < c+d,\\
& & & a, b, c, d, h, l\in \RR.
\end{aligned}
\right.}
\end{aligned}
\end{equation*}}

In what follows, we given some curves and families of minimal metrics of type \linebreak
$(1<2;4,4)$, which Pfaffian forms appear in the above quotient.

\begin{example}\label{ejm3par}
Let $\mu_{\scriptscriptstyle krst}\in W$ be given by
\begin{align*}
& A=(s,t,0,0),\ \qquad B=(0,0,r,0),\\
& C=(0,0,0,k), \qquad D=(0,0,0,-k),\\
& E=(0,0,r,0), \qquad F=(s,-t,0,0),
\end{align*}
with $k, r, s, t \in \RR$. \ It is clear that $\mu_{\scriptscriptstyle krst}$ satisfies (\ref{muint})
and (\ref{muabel}), and hence $(N_{\mu_{\scriptscriptstyle krst}}, J)$ is an abelian complex
nilmanifold for all $k, r, s, t \in \RR$. Furthermore, if $k^2+r^2=s^2+t^2$ then the family
$\{(N_{\mu_{\scriptscriptstyle krst}}, J, \ip) : k^2+r^2=s^2+t^2\}$ of minimal
(conditions (i) and (ii)) metrics is pairwise non-isometric, up to scaling. This gives rise
then a 3-parameter family of pairwise non-isomorphic abelian complex nilpotent Lie groups
(see Theorem \ref{RS}). On the other hand, the Pfaffian form of
$\mu_{\scriptscriptstyle krst}$ is
\begin{align*}
f(\mu_{\scriptscriptstyle krst}) = s^2 x^2 - t^2 y^2 - r^2 z^2 - k^2 w^2.
\end{align*}
\end{example}
%En este caso, el unico $\Sl_4(\RR)$-invariante polinomial es $\det(A_{f}) = -(krst)^2$, donde $A_{f}$
%es la matriz asociada a $f(\mu_{\scriptscriptstyle krst})$. Notemos que si alguno de los
%parametros es cero entonces $\det(A_{f})=0$, lo cual implica la importancia del $\U(2)$-invariante.

\begin{example}\label{ejm2par}
Let $\lambda_{\scriptscriptstyle rst}$ be defined by:
\begin{align*}
& A=(0,r,0,0),\ \qquad B=(0,0,s,0),\\
& C=(0,0,0,t), \qquad D=(0,0,0,-t),\\
& E=(0,0,s,0), \qquad F=(0,-r,0,0),
\end{align*}
where $r, s, t \in \RR$. \ We have $(N_{\lambda_{\scriptscriptstyle rst}}, J)$ is an abelian complex
nilmanifold for all $r, s, t$ in $\RR$. \ If $r^2=s^2+t^2$ then the family
$\{(N_{\lambda_{\scriptscriptstyle rst}}, J, \ip) : r^2=s^2+t^2\}$ of minimal compatible
metrics is pairwise non-isometric, unless scalar multiples. This gives rise then a 2-parameter
family of pairwise non-isomorphic abelian complex nilpotent Lie groups. Note that the Pfaffian
form of $\lambda_{\scriptscriptstyle rst}$ is
$$f(\lambda_{\scriptscriptstyle rst}) = - r^2 y^2 - s^2 z^2 - t^2 w^2.$$
\end{example}

\begin{example}\label{ejm1par}
Let $\nu_{\scriptscriptstyle st}$ be given by $A=(s,0,0,0)= -F$, \ $B = E = 0$,
$C=(0,0,t,0) = -D$, with $s, t \in \RR$. \ Therefore, $(N_{\nu_{\scriptscriptstyle st}}, J)$
is an abelian complex nilmanifold for all $s, t \in \RR$. \ Furthermore, if $s^2=t^2$ then
the curve $\{\nu_{\scriptscriptstyle st} : s^2=t^2\}$ of minimal compatible
metrics is pairwise non-isometric, unless scalar multiples. This gives a curve of pairwise
non-isomorphic abelian complex nilpotent Lie groups. \ Finally, the Pfaffian form of
$\nu_{\scriptscriptstyle st}$ is
$$f(\nu_{\scriptscriptstyle st}) = - s^2 x^2 - t^2 z^2.$$
\end{example}

\begin{example}\label{ejm2par2}
Let $\mu_{\scriptscriptstyle rst}\in W$ be defined by:
\begin{align*}
& A=(r,0,0,0),\ \qquad B=(0,0,s,0),\\
& C=(0,0,0,t), \qquad D=(0,0,0,-t),\\
& E=(0,0,s,0), \qquad F=(0,r,0,0),
\end{align*}
where $r, s, t \in \RR$. Hence $(N_{\mu_{\scriptscriptstyle rst}}, J)$ is an abelian complex
nilmanifold for all $r, s, t \in \RR$. If $r^2=s^2+t^2$ then
$\{\mu_{\scriptscriptstyle rst} : r^2=s^2+t^2\}$ of minimal compatible
metrics is pairwise non-isometric, up to scaling. This gives rise then a 2-parameter
family of pairwise non-isomorphic abelian complex nilpotent Lie groups. Note that
the Pfaffian form of $\mu_{\scriptscriptstyle rst}$ is given by
$$f(\mu_{\scriptscriptstyle rst}) = r^2 xy - s^2 z^2 - t^2 w^2.$$
\end{example}

%------------------------------------------------------------------------------------------------

\subsection{Type (6,2)}\label{tip62}
For $\vg_1=\RR^6$ and $\vg_2=\RR^2$, consider $\widetilde{W} :=\Lambda^2\vg_1^*\otimes\vg_2$.
Fix basis  $\{ X_1,\ldots,X_6\}$ and $\{Z_1,Z_2\}$ of $\vg_1$ and $\vg_2$, respectively. Each
element $\mu\in \widetilde{W}$ will be described as
\begin{align*}
& \mu(X_1,X_2)= a_1 Z_1 + a_2 Z_2, & & \mu(X_1,X_3)= b_1 Z_1 + b_2 Z_2, & & \mu(X_1,X_4)= c_1 Z_1 + c_2 Z_2,\\
& \mu(X_1,X_5)= d_1 Z_1 + d_2 Z_2, & & \mu(X_1,X_6)= e_1 Z_1 + e_2 Z_2, & & \mu(X_2,X_3)= f_1 Z_1 + f_2 Z_2,\\
& \mu(X_2,X_4)= g_1 Z_1 + g_2 Z_2, & & \mu(X_2,X_5)= h_1 Z_1 + h_2 Z_2, & & \mu(X_2,X_6)= i_1 Z_1 + i_2 Z_2,\\
& \mu(X_3,X_4)= k_1 Z_1 + k_2 Z_2, & & \mu(X_3,X_5)= l_1 Z_1 + l_2 Z_2, & & \mu(X_3,X_6)= m_1 Z_1 + m_2 Z_2,\\
& \mu(X_4,X_5)= n_1 Z_1 + n_2 Z_2, & & \mu(X_4,X_6)= p_1 Z_1 + p_2 Z_2, & & \mu(X_5,X_6)= q_1 Z_1 + q_2 Z_2.\\
\end{align*}
The complex structure and the compatible metric will be always defined by
$$
\begin{array}{lll}
\begin{matrix}
JX_1=X_2,&JX_3=X_4,\\
JX_5=X_6,&JZ_1=Z_2.
\end{matrix} && \la X_i,X_j\ra=\delta_{ij}, \ \la Z_k,Z_l\ra=\delta_{kl}.
\end{array}
$$
If $A=(a_1,a_2), \ \ldots \ , Q=(q_1,q_2)$, then  $J$ satisfies (\ref{integral}) if and only if
\begin{align}\label{muint2}
G=B+JC+JF, & & I=D+JE+JH, & & P=L+JM+JN,
\end{align}
and $J$ is abelian if and only if
\begin{align}\label{muabel2}
& B=G, \quad C=-F, \quad D=I, \quad E=-H, \quad L=P, \quad M=N.
\end{align}
Let $v_i=(a_i,b_i,c_i,d_i,e_i,f_i)$, \ $i=1,2$. \ It follows that
$\Ric_{\mu}\mid_{\vg_2}=\frac{1}{2}[\la v_i,v_j\ra]$, $1\leq i,j \leq 2$, and \vspace{0.3cm}
\begin{align*}
\Ricc_{\mu}\mid_{\vg_2}={\small \frac{1}{4}
\begin{bmatrix}
\|v_1\|^2+\|v_2\|^2 & 0\\
0&\|v_1\|^2+\|v_2\|^2
\end{bmatrix}} \ \in \ \RR I.
\end{align*}
For the complicated expressions, we only give sufficient conditions for any $\mu\in \widetilde{W}$ is minimal
of type $(1<2;6,2)$ when $J$ is abelian.
\begin{enumerate}\label{condRI}
\item [($\dag$)] Conditions for $\Ricc_{\mu}\mid_{\ngo_1}\in \RR I$:
{\small\begin{enumerate}
\item $\|A\|^2 = \|K\|^2 = \|Q\|^2$, \quad
$\|B\|^2+\|C\|^2=\|D\|^2+\|E\|^2=
\|L\|^2+\|M\|^2$.
\item $\la A+K, B \ra = \la A+K, C \ra = \la A+Q, D \ra = \la A+Q, E \ra = \la K+Q, L \ra =
\la K+Q, M \ra = 0$.
\item $\la B, L \ra = - \la C, N \ra$, \ $\la B, M \ra = - \la C, P \ra$,
\ $\la B, D \ra = - \la C, E \ra$.
\item $\la C, D \ra = - \la G, H \ra$, \ $\la D, L \ra = - \la E, M \ra$,
\ $\la H, L \ra = - \la I, M \ra$.
\end{enumerate}}
\end{enumerate}
If $\mu$ satisfies the conditions given in $(\dag)$ we thus get
$q:=\frac{1}{4}\left(\|v_1\|^2 +\|v_2\|^2\right)$ and \linebreak
{\small$p:=-\frac{1}{2}\left({\small \|A\|^2+\|B\|^2+\|C\|^2+\|D\|^2+\|E\|^2}\right)$}.

Since $\dim{\vg_1} = 6$ and $\dim{\vg_2} = 2$, the Pfaffian form of any $\mu\in \widetilde{W}$
belongs to the set $P_{2,3}(\RR)$. Unlike the previous two cases, there is no identification of
$f(\mu)\in P_{2,3}(\RR)$ with a matrix, but it is known that every polynomial in $P_{2,3}(\RR)$
is the Pfaffian form of some $\mu\in \widetilde{W}$ (see \cite{pfaff}). Again, of \cite{metabel},
we obtain
\begin{equation}\label{cocient2}
\begin{aligned}
P_{2,3}(\CC) /\Gl_2(\CC) \ = \
\end{aligned}
\left\{
\begin{aligned}
&x^3\\
&x^2y+xy^2 = xy(x+y)\\
&x^3+x^2y = x^2(x+y) \simeq x^2y
\end{aligned}
\right.
\end{equation}
But it is easy to see that
\begin{align*}
&P_{2,3}(\RR) \cap \Gl_2(\CC)\cdot x^3 = \Gl_2(\RR)\cdot x^3,\\
&P_{2,3}(\RR) \cap \Gl_2(\CC)\cdot (x^2y+xy^2) = \Gl_2(\RR)\cdot (x^2y+xy^2),\\
&P_{2,3}(\RR) \cap \Gl_2(\CC)\cdot x^2y = \Gl_2(\RR)\cdot x^2y,
\end{align*}
and therefore
\begin{equation*}
\begin{aligned}
P_{2,3}(\RR) / \Gl_2(\RR) \ = \
\end{aligned}
\left\{
\begin{aligned}
&x^3\\
&x^2y+xy^2 = xy(x+y)\\
&x^3+x^2y = x^2(x+y) \simeq x^2y
\end{aligned}
\right.
\end{equation*}
\begin{example}
Let $\mu_{\scriptscriptstyle st}^1, \mu_{\scriptscriptstyle st}^2,
\mu_{\scriptscriptstyle st}^3\in \widetilde{W}$ be defined by: \ for all $s, t \in \RR$,
\begin{align*}
& A^1=(0, s), &&  A^2=(t, 0), && A^3=(0, s),\\
& E^1=(t, 0), &&  K^2=(0, s), && E^3=(t, 0),\\
& H^1=(-t, 0), && Q^2=(s, t). && H^3=(-t, 0),\\
& K^1=(s, 0). &&   && K^3=(s, 0),\\
&   &&   && Q^3=(-s, 0).
\end{align*}
It follows immediately that $(N_{\mu_{\scriptscriptstyle st}^1}, J)$,
$(N_{\mu_{\scriptscriptstyle st}^2}, J)$ and $(N_{\mu_{\scriptscriptstyle st}^3}, J)$
are abelian complex nil\-ma\-ni\-folds for all $s, t \in \RR$, as they satisfy (\ref{muint2}) and
(\ref{muabel2}). Furthermore, they are not minimal of type $(1<2; 6,2)$ and its
Pfaffian forms are given by
\begin{align*}
& f(\mu_{\scriptscriptstyle st}^1) = st^2 x^3, \quad
f(\mu_{\scriptscriptstyle st}^2) = s^2t x^2y + st^2 xy^2, \quad
f(\mu_{\scriptscriptstyle st}^3) = st^2 x^3 + s^3 x^2y.
\end{align*}
Hence $\{(N_{\mu_{\scriptscriptstyle st}^2}, J) : s,t\in\RR\smallsetminus\{0, \pm 1\} \}$ and
$\{(N_{\mu_{\scriptscriptstyle st}^3}, J) : s,t\in\RR\smallsetminus\{0\}\}$ are curves of
abelian complex nilmanifolds, which is due to the fact that
\begin{align*}
& \forall a, b \in \RR\smallsetminus\{0, \pm 1\}, a\neq b: \
x^2y+axy^2 \ \notin \ \U(1)\cdot(x^2y+bxy^2).\\
& \forall a, b \in \RR\smallsetminus\{0\}, a\neq b: \
x^3+ax^2y \ \notin \ \U(1)\cdot(x^3+bx^2y).
\end{align*}
\end{example}
\begin{remark} Let $p(x,y) = \sum_{i=0}^{3}a_{i}x^{3-i}y^{i} \in P_{2,3}(\RR)$. Define
\begin{align*}
& \triangle(p):= (3a_0 + a_2)^2 + (a_1 + 3a_3)^2.\\
& \|p\|^2:= 6a_0^2 + 2a_1^2 + 2a_2^2 + 6a_3^2.\\
& D(p):= 18a_0 a_1 a_2 a_3 + a_1^2 a_2^2 - 4a_0 a_2^3 - 4a_1^3 a_3 - 27a_0^2 a_3^2.
\end{align*}
We have $\triangle$ is $\SO(2)$-invariant, $\|\cdot\|^2$ is $\Or(2)$-invariant and
$D$ is $\Sl_2(\RR)$-invariant.

Note that using quotients of the above invariants we can also obtain that
$\{(N_{\mu_{\scriptscriptstyle st}^2}, J) : s,t\in\RR\smallsetminus\{0, \pm 1\} \}$ and
$\{(N_{\mu_{\scriptscriptstyle st}^3}, J) : s,t\in\RR\smallsetminus\{0\}\}$ are curves of
abelian complex nilmanifolds.
\end{remark}

\begin{example}\label{ejm1par2}
Let $\lambda_{\scriptscriptstyle st}\in \widetilde{W}$ be given by $A=(t,s)$, $K=(-s,t)$ and
$Q=(s,t)$, with $s, t \in \RR$. \ We obtain $(N_{\lambda_{\scriptscriptstyle st}}, J)$ is an
abelian complex nilmanifold for all $s, t \in \RR$. Furthermore,
$\lambda_{\scriptscriptstyle st}$ is minimal of type $(1<2; 6,2)$. \
On the other hand, the Pfaffian form of $\lambda_{\scriptscriptstyle st}$ is
\begin{align*}
f(\lambda_{\scriptscriptstyle st}) = s^2t x^3 + s^3 x^2y - t^3 xy^2 - st^2 y^3.
\end{align*}
Define $a:=s^2$, $b:=t^2$, and consider
$$h(a,b):= \frac{D(f(\lambda_{\scriptscriptstyle st}))}{(\triangle(f(\lambda_{\scriptscriptstyle st})))^2} = \frac{4ab(a^2-b^2)^2}{(a+b)^6}$$
If $a+b=1$ then $h(a)= 4a(1-a)(2a-1)^2$ is an injective function for all $a\geq 1$. Hence $\{\lambda_{\scriptscriptstyle st} : s^2 + t^2 = 1, s\geq 1\}$ is a curve of pairwise
non-isometric metrics.
\end{example}

%=================================================================================================

\end{document}